\documentclass{article}

\usepackage[margin=1.2in]{geometry}
\usepackage{amsmath,amssymb,amsthm,mathtools,mathrsfs,bm}
\usepackage{graphicx}
\usepackage{xcolor}
\usepackage[shortlabels]{enumitem}
\usepackage{tikz}
\usetikzlibrary{calc,shapes,backgrounds}
\usepackage[colorlinks=true,linkcolor=blue,citecolor=blue,urlcolor=blue]{hyperref}

\allowdisplaybreaks

\newtheorem*{quotedtheorem}{Theorem}
\newtheorem{theorem}{Theorem}[section]

\newtheorem{lemma}[theorem]{Lemma}

\newtheorem{claim}[theorem]{Claim}

\DeclareMathOperator{\ex}{ex}
\title{Codegree-Constrained Turán Problems for $F_5$
 and Its Blow-ups}
\date{\today}
\author{
Yupei Li\thanks{University of South Carolina, Columbia, SC 29208, USA. Email: \texttt{yupei@email.sc.edu}.}
\and
 Linyuan Lu\thanks{University of South Carolina, Columbia, SC 29208,
({\tt lu@math.sc.edu}).
This author was supported in part by NSF grants DMS 2038080 and DMS 2601952.
}
}
\begin{document}

\maketitle

\begin{abstract}
Let $F_5$ denote the $3$-uniform hypergraph with edge set
$\{abc,abd,cde\}$. For a hypergraph property $\mathcal P$, let
$\ex_{\mathcal P}(n,F_5)$ be the maximum number of edges in an
$n$-vertex $F_5$-free $3$-uniform hypergraph satisfying $\mathcal P$. Let $\Delta_2(H)$ denote the maximum codegree of a pair of vertices in a $3$-uniform hypergraph $H$.
We determine the exact value of $\ex_{\Delta_2\ge m}(n,F_5)$ when
$m$ is bounded away from $n/3$ by a fixed positive proportion. More
precisely, for every constant $c\in (1/3,1)$, there exists $n_0$ such that
for all $n\ge n_0$ and $m=cn$,
\[
\ex_{\Delta_2\ge m}(n,F_5)
=
m\left\lceil\frac{n-m}{2}\right\rceil
\left\lfloor\frac{n-m}{2}\right\rfloor.
\]
The unique extremal hypergraph is the complete tripartite $3$-graph
$K^3_{m,\lceil (n-m)/2\rceil,\lfloor (n-m)/2\rfloor}$.
We also prove the corresponding exact result for the blow-ups $F_5^t$,
together with stability theorems showing that every near-extremal
configuration is close to the relevant tripartite construction.
\end{abstract}
\section{Introduction}
Given a family $\mathcal{F}$ of graphs, a graph $G$ is \emph{$\mathcal{F}$-free} if it contains no member of $\mathcal{F}$ as a subgraph. The classical Turán number $\ex(n,\mathcal{F})$ denotes the maximum number of edges in an $\mathcal{F}$-free graph on $n$ vertices.

Recently, several authors have investigated Turán-type problems under additional structural constraints. In particular, it has been observed that if an $\mathcal{F}$-free graph is required to contain a large prescribed substructure, then its maximum number of edges can be strictly smaller than the classical Turán bound.

A pioneering result in this direction was obtained by Balister, Bollobás, Riordan, and Schelp~\cite{BBRS}, who proved that every $C_{2k+1}$-free graph on $n$ vertices with maximum degree at least $m>\lceil n/2\rceil$ has at most $m(n-m)$ edges, with equality attained by the complete bipartite graph $K_{m,n-m}$. Huo and Yuan~\cite{HY} later generalized this result to all edge-critical graphs. Several related questions have been investigated recently. See~\cite{LRW} for the effect of a large-degree vertex on the Andr\'asfai--Erd\H{o}s--S\'os theorem, and see~\cite{HHLLYZ} for analogous questions on Tur\'an problems for degenerate hypergraphs under a large-degree condition. Very recently, Behague, Chakraborti, and Liu~\cite{BehagueChakrabortiLiu} considered a different way of ``sabotaging'' a classical extremal theorem. They studied the maximum number of edges in a triangle-free graph that is required to contain a prescribed graph $\mathbb P$ as a subgraph; that is,
\[
\ex_{\mathbb P}(n,K_3)
=
\max\Bigl\{e(G): |V(G)|=n,\; G \text{ is } K_3\text{-free, and } \mathbb P\subseteq G\Bigr\}.
\]

In this paper we investigate analogous questions for $3$-uniform hypergraphs. Given a hypergraph property $\mathcal P$, let
\[
\ex_{\mathcal P}(n,\mathcal F)
\]
denote the maximum number of edges in an $\mathcal F$-free hypergraph on $n$ vertices satisfying the property $\mathcal P$.

Our focus is the Turán problem for the $3$-uniform hypergraph $F_5$, which consists of the three edges
\[
\{abc,\,abd,\,cde\}
\]
on the vertex set $\{a,b,c,d,e\}$. The classical Tur\'an problem for $F_5$ was solved by Frankl and F\"uredi \cite{FF}, who proved that for sufficiently large $n$ (in particular, for $n\ge 3000$),   the unique extremal hypergraph is the complete balanced tripartite $3$-uniform hypergraph. Consequently,
\[
\ex(n,F_5)=t_3^3(n),
\]
where $t_3^3(n)$ denotes the number of edges in the complete $3$-partite $3$-uniform hypergraph whose vertex classes differ in size by at most one.  Keevash and Mubayi \cite{KM} improved this result by only requiring $n\geq 33$.

Let $\Delta_2(H)$ denote the maximum codegree of a pair of vertices in a $3$-uniform hypergraph $H$. In this paper we study the Turán problem for $F_5$ under the additional condition that
\[
\Delta_2(H)\ge m,
\]
where $m= cn$ for some constant $\frac{1}{3}<c<1$. Equivalently, we seek the maximum number of edges in an $F_5$-free $3$-uniform hypergraph containing a pair of vertices with a large common neighborhood.

Our main result determines the corresponding extremal number exactly.

\begin{theorem}\label{thm:main}
For every constant $\frac{1}{3} <c<1$, there exists $n_0$ such that the following holds. If $n\ge n_0$ and $m=cn$, then
\[
\ex_{\Delta_2\ge m}(n,F_5)
=
m\left\lceil\frac{n-m}{2}\right\rceil
\left\lfloor\frac{n-m}{2}\right\rfloor.
\]
Moreover, the unique extremal hypergraph is
\[
K^3_{\,m,\,
\left\lceil \frac{n-m}{2}\right\rceil,\,
\left\lfloor \frac{n-m}{2}\right\rfloor}.
\]
\end{theorem}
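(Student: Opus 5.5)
The lower bound comes from the construction. In $K^3_{m,\lceil (n-m)/2\rceil,\lfloor (n-m)/2\rfloor}$, take $u,v$ in the two smaller classes; their codegree is $m$. The hypergraph is $F_5$-free, because it is tripartite. Indeed, in any copy of $F_5$ the vertices $c,d$ would have to lie in the same class (both complete $ab$ to a crossing triple), and then $cde$ is not an edge.

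For the upper bound I would run a stability argument followed by cleaning. Let $H$ be $F_5$-free on $n$ vertices, with a pair $uv$ of codegree at least $m$, and with
\[
e(H)\ge m\lceil (n-m)/2\rceil\lfloor (n-m)/2\rfloor .
\]
Let $M=N(uv)$, so $|M|\ge m$.

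The first observation is that $M$ is independent in a strong sense. If $c,d\in M$ and $cde\in H$, then $uvc,uvd,cde$ form an $F_5$ (with $e\ne u,v$ automatically, since $ucd$ is excluded by the same pattern: $uvc,uvd,cdu$ is fine only if $u\neq e$, so it is also excluded). Hence no edge of $H$ contains two vertices of $M$. Every edge therefore meets $M$ in at most one vertex. Write $W=V\setminus M$, with $|W|=n-|M|$. Then
\[
e(H)=\sum_{x\in M} e(L_x)+e(H[W]),
\]
where $L_x$ is the link graph of $x$ on $W$.

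Next I would exploit $F_5$-freeness of the links. If $L_x$ contains a triangle, or more generally if $x$ together with an edge of $H[W]$ creates the pattern $\{xab,xac?\}$, this yields constraints. The key one: if $ab\in L_x$, $ab\in L_y$ for distinct $x,y\in M$, and $xyz$ were an edge, we would get an $F_5$; but no edge contains both $x$ and $y$, so that gives nothing. Instead, consider $abx, aby$ with $x,y\in M$ and any edge $xy?$: none exists. The useful obstruction is $\{x a b, x a c, b c e\}$: if $L_x$ contains the path $b\,a\,c$ and $bce\in H$, we have an $F_5$. Likewise a triangle $abc$ in $L_x$ gives $xab,xac$ and requires $bc\cdot\in H$, which occurs since $xbc\in H$, so $bcx$ is an edge with $e=x$. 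That is not allowed (e must be new), so the obstruction must come from elsewhere. The cleaner route is to show that each $L_x$ is triangle-free, via a second vertex $y\in M$ sharing those pairs. With many $x$ of large link, pigeonhole gives two sharing a triangle $abc$: then $xab,yab$ and $abc$-type edges… I would aim instead for the bound $e(L_x)\le |W|^2/4$ by showing $L_x$ is triangle-free whenever $x$ is "typical". In that case
\[
e(H)\le |M|\,|W|^2/4+e(H[W]).
\]
Also, $e(H[W])$ must be small: an edge $bce$ in $W$ kills all pairs $ab$, $ac$ in a common link. So each edge of $H[W]$ forces quadratically many missing link pairs, and this should beat its contribution when $c>1/3$.

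Optimizing $|M|\,(n-|M|)^2/4$ over $|M|\ge m>n/3$ gives a decreasing function, so the maximum is at $|M|=m$. Near-equality then forces three things: $|M|=m$, $H[W]$ essentially empty, and almost every $L_x$ close to a balanced complete bipartite graph. $F_5$-freeness (two links sharing pairs) forces these bipartitions to agree, which gives stability.

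Finally, the exact result comes from the standard cleaning. Vertices of low degree are deleted, an induction-type count shows they cannot exist in an extremal graph, and every remaining edge is tripartite with respect to $(M,A,B)$. The hardest step is the first reduction: proving that $e(L_x)\le |W|^2/4$ up to lower-order error, while simultaneously controlling $e(H[W])$. Here I would first try the Frankl–Füredi-type argument on the pairs of $W$ that are covered by at least two vertices of $M$.
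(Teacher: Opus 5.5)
\textbf{Gap 1: strong independence and exactness.} The lower bound is fine, but the claim that $e\neq u,v$ holds ``automatically'' is false. $F_5$ needs five distinct vertices, and $\{uvc,uvd,cdu\}$ spans only four. For example, $K_4^{(3)}$ on $\{u,v,c,d\}$ is $F_5$-free, has $c,d\in N(uv)$, and contains $ucd$. $F_5$-freeness only excludes edges $cde$ with $e\notin\{u,v\}$. So your identity $e(H)=\sum_{x\in M}e(L_x)+e(H[W])$ misses the edges through $u$ or $v$ that meet $M$ twice. That is an $O(n^2)$ error: harmless for stability, but it must be ruled out for the exact statement.

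The paper handles this by passing to the $3$-core $H'$: repeatedly delete edges through pairs of codegree $1$ or $2$, losing $O(n^2)$ edges. In $H'$ every shadow pair $cd$ has codegree at least $3$, so it lies in an edge $cdf$ with $f\notin\{u,v\}$, and $M$ is genuinely strongly independent.

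Exactness is then recovered by bootstrapping, not by a low-degree vertex-deletion induction. Such an induction is awkward here, since deleting vertices of $M$ lowers the codegree of $uv$ and shifts $m/n$. Instead, starting from $|E(H')\Delta E(K^3(V_1,V_2,V_3))|=O(n^2)$, the paper uses explicit $F_5$'s to show that pairs of codegree $1$ or $2$ in $H$ number $O(n)$, then $O(1)$, then none. Hence $H=H'\subseteq K^3(V_1,V_2,V_3)$ with $|V_1|=r\ge m$, and monotonicity of $f(n,r)$ in $r$ gives the bound and uniqueness.

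\textbf{Gap 2: the central step is left open.} You acknowledge that bounding the links by about $|W|^2/4$ while paying for $e(H[W])$ is not done, so the upper bound is not actually established. Your pigeonhole idea does handle the links if you use one shared pair instead of a shared triangle. If $L_x$ contains a path $b\,a\,c$ and $bc\in L_y$ for some $y\in M\setminus\{x\}$, then $\{xab,xac,bcy\}$ is an $F_5$. So $L_x$ restricted to pairs lying in at least two links is triangle-free, and $\sum_{x\in M}e(L_x)\le |M|\,|W|^2/4+\binom{|W|}{2}$.

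The trade-off with $e(H[W])$, which is a priori $\Theta(n^3)$, is however only asserted. An edge $bce\in H[W]$ merely forbids common link-neighbours of $b,c$ other than $e$, so the cost falls on shadow pairs rather than on edges, and quantifying it is the real work.

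The idea you are missing is the paper's elimination of $K_4$'s from $\partial H$. In the $3$-core, suppose $\{w,x,y,z\}$ spans a $K_4$ in $\partial H$. Then any pair $pq$ not inside it has at most one neighbour among these four vertices: if $pqx,pqy\in H$, the core gives $xyg\in H$ with $g\notin\{p,q\}$, and $\{pqx,pqy,xyg\}$ is an $F_5$. Hence the four vertices lie in at most $\binom n2-\binom r2+O(n)$ edges. This is $\Theta(n^2)$ less than deleting four vertices costs in the tripartite construction, which uses $3r>n$. So if $e(H)\ge f(n,m)-t$, greedy deletion of such $K_4$'s stops after $O(t/n^2)$ rounds.

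Once $\partial H$ is $K_4$-free, the link of a maximum-degree vertex $v_0\in M$ is triangle-free and its vertex support $S$ spans no edge. Moreover, pairs of $S$ with a common $L_{v_0}$-neighbour lie in no edge of $H[\overline M]$. The Type I--IV count then gives $e(H)\le f(n,m)$ and forces $n-m-|S|=O(t/n^2)$. Finally, F\"uredi's stability theorem applied to $L_{v_0}$, together with $F_5$-freeness aligning the other links to its bipartition, yields the tripartite structure.
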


We also prove a stability theorem of Theorem \ref{thm:main}. 

\begin{theorem}\label{thm:sta}
For every constant $\frac{1}{3} <c<1$ and every constant $C>0$, there exist constants $C'>0$ and $n_0$ such that the following holds. Let $n\ge n_0$ and $m=cn$, and let $H$ be an $F_5$-free $3$-uniform hypergraph on $n$ vertices with $\Delta_2(H)\ge m$. Suppose that $0\leq t \leq Cn^3$, if
\[
|E(H)|
\geq
m\left\lceil\frac{n-m}{2}\right\rceil
\left\lfloor\frac{n-m}{2}\right\rfloor
-t,
\]
then there exists a partition
\[
V(H)=V_1\cup V_2\cup V_3
\]
such that $|V_1| \geq m$ and
\[
|E(H)\,\Delta\,E(K^3(V_1,V_2,V_3))|
\le C't.
\]
If $t=o(n^2)$, then 
\[ H\subseteq K^3(V_1,V_2,V_3).\]
\end{theorem}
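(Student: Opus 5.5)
Fix a pair $\{x,y\}$ with codegree at least $m$ and let $N=N(x,y)$ be its common neighbourhood, so $|N|\ge m$. The first step uses $F_5$-freeness to control edges meeting $N$. If $z,w\in N$, then $xyz, xyw$ together with any edge $zwu$ with $u\notin\{x,y\}$ form an $F_5$ (with $a,b=x,y$, $c,d=z,w$, $e=u$). Hence the only edges containing two vertices of $N$ are $xyz$-type edges or edges $zwx$, $zwy$, and so the number of edges meeting $N$ in at least two vertices is $O(n^2)$. Consequently, up to $O(n^2)$ edges, every edge meets $N$ in at most one vertex. Write $R=V\setminus N$, with $r=|R|\le n-m$. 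We then bound
\[
|E(H)|\le \sum_{v\in N} e(L_v[R]) + e(H[R]) + O(n^2),
\]
where $L_v[R]$ is the link graph of $v$ restricted to $R$.

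The second step bounds the link graphs. For $v\in N$, $L_v[R]$ must be triangle-free up to a small error. Indeed, if $v$ has a link triangle $pqs$ in $R$, then the edges $vpq, vps$ and any edge through $q,s$ together with a fifth vertex give $F_5$. More precisely, $F_5$-freeness forces the codegree of $qs$ to be $\le 2$ outside $\{v,p\}$. A cleaner route is the following: for $u\ne v$ in $N$, an edge $vpq$ paired with $upq$ shows that $pq$ has codegree $\ge2$ with $v,u$, and any other edge $v$-$p$-$?$ containing $v$ or $u$ with a new vertex creates $F_5$. Rather than chase all of this, I would use the standard fact that in an $F_5$-free 3-graph every link graph intersected with large structures is close to bipartite. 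So by Mantel, $e(L_v[R])\le r^2/4+O(n)$. Likewise $e(H[R])\le \ex(r,F_5)\le r^3/27$. This gives $|E(H)|\le |N|r^2/4 + r^3/27+O(n^2)$. With $|N|+r=n$ and $|N|\ge m>n/3$, the function $f(s)=s(n-s)^2/4+(n-s)^3/27$ is decreasing for $s\ge n/3$, so it is maximised at $s=m$ with value $m(n-m)^2/4+(n-m)^3/27$. The term $(n-m)^3/27$ is too large, so I need a sharper interaction bound: the edges inside $R$ and the link edges of $N$ must be near-disjoint as pairs. If $pq$ lies in some $L_v$ ($v\in N$) and also in an edge $pqs\subset R$, then the edges $vpq$, $pqs$, and edges through $v$ or $s$ produce $F_5$ configurations. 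Quantitatively, I would show that the pairs of $R$ that are heavy in $\sum_v L_v$ (covered by $\Omega(n)$ vertices of $N$) span few $R$-edges, because the codegree of such a pair is large and the first-step argument applied to that pair shows the edges containing it are essentially confined. This yields $|E(H)|\le m\,r^2/4 + O(\text{smaller})$ after optimisation, and the savings count gives $\sum_v (r^2/4 - e(L_v[R])) + |N|-m$ terms $= O(t)$.

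The third step is stability. The deficiency $t$ bounds $\sum_{v\in N}(\lfloor r^2/4\rfloor - e(L_v[R]))$ and $(|N|-m)n^2$ up to constants. By Füredi-type stability for triangle-free graphs, each $L_v[R]$ is within $O(\text{deficit}_v)$ of a complete bipartite graph. A majority or averaging argument then shows that most of these bipartitions agree; if two typical links had different bipartitions, some link triangle or an $F_5$ would appear via two vertices of $N$. This gives $V_2\cup V_3=R$ and $V_1=N$, with symmetric difference $O(t)$. Finally, when $t=o(n^2)$, every vertex has near-full degree after cleaning, and any edge outside $K^3(V_1,V_2,V_3)$ combines with two typical edges to create an $F_5$. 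I expect the second step to be the main obstacle, namely killing the $e(H[R])$ contribution and the bad edges meeting $N$ twice with only $O(t)$ loss rather than $O(n^2)$. That is where the interaction between the $N$-links and $R$ must be exploited carefully.
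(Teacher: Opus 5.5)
\textbf{There is a genuine gap: the decisive inequality of your Step~2 is asserted, not proved, and it is essentially the whole theorem.}

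You need $\sum_{v\in N}e(L_v[R])+e(H[R])\le |N|\lfloor r^2/4\rfloor+(\text{error})$. In other words, the possible $\Theta(n^3)$ edges inside $R$ must be paid for by deficits in the links, and the heavy-pair heuristic does not deliver this.

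\begin{enumerate}
\item From an edge $pqs\subseteq R$ and an edge $vpq$ with $v\in N$, $F_5$-freeness gives only $N(v,s)\subseteq\{p,q\}$. After discarding pairs of codegree at most $2$, this says $s$ is not covered by $L_v$. That constrains one link at a time. Different $v\in N$ can have different links, and you give no argument that summing these constraints over $v$ produces a deficit of order $e(H[R])$.
\item An edge of $H[R]$ whose three vertices are all covered by $L_v$, with none of its pairs in $L_v$, gives no immediate $F_5$ at all. It is a $K_4$ in $\partial H$, and it needs separate treatment.
\item Your handling of link triangles is off. A triangle $pqs$ in $L_v$ creates no $F_5$: every link of $K_4^{(3)}$ is a triangle. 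It only forces $d(p,q),d(p,s),d(q,s)\le 2$. So the correct repair is a global $O(n^2)$ cleaning, not a ``close to bipartite'' fact about individual links.
\item All your error terms are $O(n^2)$. The statement requires $O(t)$ for every $t$, including $t=0$, and the inclusion $H\subseteq K^3(V_1,V_2,V_3)$ when $t=o(n^2)$. You flag this yourself but give no mechanism.
\end{enumerate}

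\textbf{How the paper supplies the missing ideas.}
\begin{enumerate}
\item It passes to the $3$-core $H'$, deleting edges through pairs of codegree $1$ or $2$. In $H'$ the set $N$ is exactly strongly independent.
\item It removes shadow $K_4$'s one at a time. For a shadow $K_4$ on $Q$, every pair $uv$ not inside $Q$ has at most one common neighbour in $Q$. Otherwise $uvq_1,uvq_2,q_1q_2p$ is an $F_5$ for some $q_1,q_2\in Q$ and some $p\notin\{u,v\}$, which exists because $d(q_1,q_2)\ge 3$. So deleting $Q$ loses at most $\binom n2-\binom{|N|}{2}+O(n)$ edges, while the target $f$ drops by $\Theta(n^2)$ more; this is where $|N|>n/3$ is used. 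Hence only $O(t/n^2)$ deletions occur.
\item With a $K_4$-free shadow, it measures everything against a single link, that of a vertex $v_0\in N$ of maximum degree. Edges meeting $N$ number at most $|N|\,e(L_{v_0})$. The set $S$ covered by $L_{v_0}$ spans no edge. Pairs of $S$ with a common $L_{v_0}$-neighbour lie in no edge of $H[V\setminus N]$, and their number is bounded below via $C_4$-counting and Cauchy--Schwarz.
\item Optimising over $(e(L_{v_0}),|S|)$ gives $|E(H)|\le f(n,m)$, $|V\setminus N|-|S|=O(t/n^2)$ and $|S|^2/4-e(L_{v_0})=O(t/n)$. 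F\"uredi's stability theorem and an alignment argument like your Step~3 then give the partition.
\item The passage from $O(n^2)$ to $O(t)$ is a bootstrap on pairs of codegree $1$ or $2$. A crossing such pair lies in $\Omega(n)$ missing triples. A pair $uv$ inside a part that lies in an edge $uvw$ forces one of $u,v$ into $\Omega(n^2)$ missing triples, via the $F_5$ $\{zz'u,zz'v,uvw\}$ with $z\in V_j\setminus\{w\}$ and $z'\in V_k$. Iterating improves the error $O(n^2)\to O(n)\to O(1)\to 0$.
\end{enumerate}
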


Our Theorem~\ref{thm:sta} is a hypergraph analogue of the following stability theorem of F\"{u}redi~\cite{F}.

\begin{quotedtheorem}[\cite{F}] \label{lem:furedi-sta}
Suppose that $G$ is $K_{r+1}$-free with $e(G) \geq t_r(n)-t$. Then there is a complete $r$-partite graph $K=K(V_1,\ldots,V_r)$ with $V(K)=V(G)$ such that
\[
|E(G)\Delta E(K)|\leq 3t.
\]
\end{quotedtheorem}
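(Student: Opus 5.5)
The quoted statement is Füredi's stability theorem, which the paper cites from~\cite{F}. Here is how I would reprove it; the plan is the standard symmetrization-plus-counting argument. Let $G$ be $K_{r+1}$-free with $e(G)\ge t_r(n)-t$. First, by Zykov symmetrization (or Erdős's degree-majorization lemma) there is an $r$-partite graph $G'$ on $V(G)$ with $d_{G'}(v)\ge d_G(v)$ for all $v$. I do not expect this to be the right vehicle for a $3t$ bound, though. Instead I will pick a partition $V_1,\dots,V_r$ directly from $G$ as follows. Take a vertex $v_1$ of maximum degree and set $V_1=V\setminus N(v_1)$. Inside $N(v_1)$, take a vertex $v_2$ of maximum degree in $G[N(v_1)]$ and set $V_2=N(v_1)\setminus N(v_2)$. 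Continue in this way. Since $G$ is $K_{r+1}$-free, the process stops after at most $r$ steps, with the last common neighbourhood independent; so it produces at most $r$ classes, which I pad with empty classes if needed.

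The key step is a double count. Let $K=K(V_1,\dots,V_r)$ and let $e_{\mathrm{in}}$ be the number of edges of $G$ inside the classes. Every vertex of $V_i$ misses $v_i$'s neighbourhood at stage $i$. By the maximality of $d(v_i)$ at each stage, $\sum_{v\in V_i} d(v)\le |V_i|\,(\text{degree of } v_i \text{ in the current graph}) + (\text{edges from } V_i \text{ to earlier classes})$. Summing over $i$ gives
\[
2e(G)=\sum_v d(v)\le 2\,e(K)-\bigl(\text{correction}\bigr),
\]
more precisely $e(G)+e_{\mathrm{in}}\le e(K)$. The cleanest form I would aim for is
\[
e(G)+e_{\mathrm{in}}(G)\le e(K),
\]
which is the Erdős-style inequality obtained by replacing each $V_i$ by an independent set joined to everything after it. Granting it, we have $e(K)\le t_r(n)$ and $e(G)\ge t_r(n)-t$, so $e_{\mathrm{in}}\le t$. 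The missing cross edges satisfy
\[
|E(K)\setminus E(G)| = e(K)-(e(G)-e_{\mathrm{in}}) \le t_r(n)-t_r(n)+t+t = 2t.
\]
Adding the at most $t$ edges of $G$ inside the classes gives $|E(G)\,\Delta\,E(K)|\le 3t$.

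The main obstacle is proving $e(G)+e_{\mathrm{in}}\le e(K)$ exactly, with constant $1$. I would do it by induction on $r$ applied to $G[N(v_1)]$, which is $K_r$-free. We have $e(G)=e(G[N(v_1)])+e(V_1,N(v_1))+e(G[V_1])$. Maximality of $v_1$ gives $\sum_{v\in V_1} d(v)\le |V_1|\,|N(v_1)|$, and $\sum_{v\in V_1} d(v)= e(V_1,N(v_1))+2e(G[V_1])$. Hence $e(V_1,N(v_1))+2e(G[V_1])\le |V_1||N(v_1)|$. The induction hypothesis on $N(v_1)$ supplies the remaining classes and their inside edges, and adding the two inequalities closes the induction.
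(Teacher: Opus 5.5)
The paper gives no proof of this statement (it is quoted from F\"uredi), but your argument is correct and is essentially F\"uredi's original proof. It has the same steps: classes cut out by successive maximum-degree vertices $v_i$, the inequality $e(G)+e_{\mathrm{in}}\le e(K)$, and then $e_{\mathrm{in}}\le t$ together with at most $2t$ missing cross edges. Your first summation over the classes already proves $e(G)+e_{\mathrm{in}}\le e(K)$ directly: the terms $|V_i|$ times the degree of $v_i$ in the current graph sum to $e(K)$, and the edges from each $V_i$ back to earlier classes sum to $e(G)-e_{\mathrm{in}}$. So the induction on $r$ is a valid repackaging of that computation rather than a genuine remaining obstacle.
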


By comparison, Keevash and Mubayi \cite{KM} proved the following weaker stability result.
\begin{quotedtheorem}[\cite{KM}]
For any $\epsilon>0$, there exists $\delta>0$ such that if $H$ is an $F_5$-free $3$-uniform hypergraph on $n$ vertices with at least 
$(1-\delta)\lfloor \frac{n}{3}\rfloor\lfloor \frac{n+1}{3}\rfloor\lfloor \frac{n+2}{3}\rfloor$
edges, then there is a partition of the vertex
set of $H$ as $V=V_1\cup V_2 \cup V_3$ so that all but at most $\epsilon n^3$ edges of $H$ have one point
in each $V_i$.
\end{quotedtheorem}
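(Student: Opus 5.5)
The plan is to go through the hypergraph removal lemma and then a symmetrization argument of the type used for blow-up-invariant families. We may assume that $\epsilon$ is small and that $n$ is large.

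\textbf{Step 1: removal.} Apply the hypergraph removal lemma to $H$. Since $H$ is $F_5$-free, it has no copies of $F_5$, and so only $o(n^5)$ homomorphic copies of $F_5$ whose image has fewer than $5$ vertices. More usefully, $H$ contains no copy of any blow-up $F_5^{(t)}$ for fixed $t$. By supersaturation, $H$ therefore contains only $o(n^{5})$ copies of $F_5$ in a blown-up sense. The removal lemma then lets us delete $\epsilon' n^3$ edges to obtain $H'$ with the following property: $H'$ contains no homomorphic image of $F_5$, that is, no edges $abc,abd,cde$ with $c$ possibly equal to $d$ only in degenerate ways. Concretely, we get that for every pair $ab$ with $c,d\in N_{H'}(ab)$, no edge of $H'$ contains $cd$ unless it meets $\{a,b\}$. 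Moreover, $H'$ is $\mathcal{F}$-free, where $\mathcal{F}$ is the family of $F_5$-homomorphic images. This family is \emph{closed under blow-ups}, so we can symmetrize.

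\textbf{Step 2: symmetrization.} Call two vertices $u,v$ \emph{equivalent} if no edge of $H'$ contains both of them. For such a pair, replacing $v$ by a clone of $u$ keeps $H'$ free of $F_5$-homomorphic images, because a homomorphism into the new graph composes with $v\mapsto u$. Following Sidorenko and Pikhurko, we repeatedly clone a vertex of maximum degree over a non-adjacent vertex of smaller degree. This never decreases the number of edges. The process ends at a blow-up $G$ of a small hypergraph $T$ in which every pair of vertices is covered by an edge. Since $G$ is $F_5$-hom-free, $T$ is a $2$-covering $F_5$-hom-free $3$-graph. The Frankl–F\"uredi analysis, taken at the level of Lagrangians, gives $\lambda(T)\le 1/27$, with equality only for $T=K^3_3$. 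Since $e(G)\ge e(H')\ge(1-\delta-\epsilon')n^3/27$, the Lagrangian of $T$ is close to $1/27$. A stability version of the Lagrangian bound then shows that $G$ is $\epsilon''n^3$-close to a complete tripartite $3$-graph.

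\textbf{Step 3: pulling back (the main obstacle).} The hard part is transferring the tripartite structure back from $G$ to $H'$, since cloning forgets the original edges. I would use Pikhurko's trick. We process the vertices in batches, and at each point we keep track of the current partition of the ``untouched'' vertices. Because every step is almost optimal, each cloned vertex $v$ had, before cloning, degree close to the maximum. Its link graph must also be nearly complete bipartite between two of the parts: otherwise degree counting of its link inside the tripartite structure contradicts $F_5$-hom-freeness. Here one uses that the link of $v$ together with the codegree sets $N(ab)$ must avoid covered pairs.

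Alternatively, one can run the argument directly, without symmetrization. Pick a pair $ab$ of maximum codegree, so $|N(ab)|\approx n/3$ by averaging. Set $V_1:=N(ab)$; this set spans almost no covered pairs. Next, define $V_2,V_3$ through the link graph of a typical vertex of $V_1$, which is nearly triangle-free and dense, and so is close to bipartite by Erd\H{o}s–Simonovits stability. Finally, check by counting that all but $\epsilon n^3$ edges are transversal. In either route, the delicate point is the quantitative transfer from the near-extremal edge count to this structure, and I would try the direct link-graph argument first.
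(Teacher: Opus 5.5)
The paper gives no proof of this statement: it is quoted from Keevash and Mubayi for comparison only. So your argument has to stand on its own, and as written it is an outline whose decisive step is missing.

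Your Step 1 reaches a correct conclusion. Reducing to cancellative, i.e.\ $\{F_5,K_4^-\}$-free, $3$-graphs is also how Keevash and Mubayi start. But the justification is garbled. In a homomorphic image of $F_5=\{abc,abd,cde\}$ one always has $c\neq d$; the only degeneracy is $e\in\{a,b\}$, which gives $K_4^-=\{abc,abd,acd\}$. What you need is that $H$ has $o(n^4)$ copies of $K_4^-$. This holds because $F_5$ embeds in the $2$-blow-up of $K_4^-$, so supersaturation and removal give $H'$. What $H'$ gains is that every $N_{H'}(ab)$ is strongly independent. Your ``concretely'' condition is weaker: it is just $F_5$-freeness again. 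After Step 1, the entire content of the theorem is stability for cancellative $3$-graphs, and neither of your routes establishes it.

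\textbf{Symmetrization route.} A $2$-covering $\{F_5,K_4^-\}$-free pattern $T$ has every codegree equal to $1$: if $abc,abd\in T$, any edge covering $cd$ creates an $F_5$ or a $K_4^-$. So $T$ is a Steiner triple system of unbounded size, not a small hypergraph. Every such $T$ attains Lagrangian value $1/27$ on a single triple, so ``equality only for $K^3_3$'' is false. The statement you actually need is that a weighting of a Steiner triple system with value close to $1/27$ is concentrated near one triple; you assert this but do not prove it.

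More seriously, Step 3 is only gestured at. Near-optimality of the whole process bounds only the total degree gain over all cloning steps. It therefore controls most steps rather than each one. It also concerns degrees in the intermediate, already modified hypergraphs, not in $H'$. Moreover, the parts relative to which the link of $v$ should be ``nearly complete bipartite'' exist only in the final blow-up, after the relevant vertices have been overwritten. Making this transfer rigorous is precisely the hard part of symmetrization-based stability proofs.

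\textbf{Direct route.} It fails at its first line. Averaging $3e(H)\approx n^3/9$ over $\binom n2$ pairs only yields a pair of codegree about $2n/9$, not $n/3$. Averaging over shadow pairs instead would need $|\partial H|\le (1/3+o(1))n^2$, which you do not have. Counting around $M=N(ab)$ does not rescue it either. With $r=\Delta_2(H')$, the bound $e(H')\le r(n-r)^2/4+\ex(n-r,F_5)$ is at least $n^3/27-O(n^2)$ for every $r\le n/3$, so it gives no lower bound on $r$. Showing that a maximum-codegree neighbourhood is essentially one of the three parts is essentially the whole theorem.
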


More recently, Balogh, Clemen, and Luo~\cite{BaloghClemenLuo} studied the Turán number of $F^t_5$, where $F_5^t$ is the hypergraph on the vertex set $\{1,2,\ldots,5+t\}$ and edge set $\{123,124\} \cup \{34k: 5 \leq k \leq 5+t\}$ for any integer $t$. In particular, $F^0_5=F_5$. They showed that it has exponentially many extremal constructions. Very recently, the Turán number of the blow-up of $F_5$ on a different vertex has also been investigated; see \cite{ZCGK}. Let $F_2^t$ be the hypergraph on the vertex set $\{1,2,\ldots,3+t\}$ and edge set $ \{12k: 3 \leq k \leq 3+t\}$. Let
$\mathcal{H}(n,t)$ be the family of extremal $n$-vertex $F_2^t$-free $3$-graphs $H(n,t)$.

\begin{quotedtheorem}[\cite{BaloghClemenLuo}]
   For every integer $t\ge0$ and sufficiently large $n$,
   \[\ex(n, F^t_5)=\left\lfloor\frac{n}{3}\right\rfloor
\left\lfloor\frac{n+1}{3}\right\rfloor
\left\lfloor\frac{n+2}{3}\right\rfloor
+\operatorname{ex}\!\left(\left\lfloor\frac{n}{3}\right\rfloor,F_2^t\right)
+\operatorname{ex}\!\left(\left\lfloor\frac{n+1}{3}\right\rfloor,F_2^t\right)
+\operatorname{ex}\!\left(\left\lfloor\frac{n+2}{3}\right\rfloor,F_2^t\right).\]
Moreover, the extremal hypergraphs are $K^3_{\left\lfloor\frac{n}{3}\right\rfloor,\left\lfloor\frac{n+1}{3}\right\rfloor, \left\lfloor\frac{n+2}{3}\right\rfloor}$ with a copy of $H(|V_i|,t) \in \mathcal{H}(|V_i|,t)$ inside each part.
\end{quotedtheorem}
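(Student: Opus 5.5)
The lower bound comes from a direct construction. The upper bound I would prove by the usual route: stability, cleaning, then an exact count.

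\textbf{Lower bound.} Take the balanced complete tripartite $3$-graph on $V_1\cup V_2\cup V_3$ and place an extremal $F_2^t$-free $3$-graph $H(|V_i|,t)$ inside each $V_i$. Being $F_2^t$-free means exactly that every pair has codegree at most $t$ within $H(|V_i|,t)$.

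I would check $F_5^t$-freeness by cases on the two edges $123,124$.
\begin{itemize}[noitemsep]
\item If $123$ and $124$ are both crossing edges, then $3$ and $4$ lie in the same part. Hence $34$ lies in no crossing edge. Every edge containing $34$ is inside that part, so there are at most $t$ of them, fewer than the required $t+1$.
\item If one of $123,124$ is an internal edge, then $1,2$ lie in a part $V_i$. Edges through the pair $12$ are either internal to $V_i$ or crossing. If both are internal, then $3,4\in V_i$ and again $34$ has codegree at most $t$.
\item If one is internal and one crossing, then $3\in V_i$ and $4\notin V_i$ (or vice versa). But a crossing edge cannot contain two vertices of $V_i$, so this case cannot occur.
\end{itemize}

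\textbf{Stability.} Let $H$ be extremal, so $e(H)\ge t_3^3(n)+O(n^2)$. An $F_5^t$-free graph contains only $o(n^3)$ copies of $F_5$ (supersaturation plus the fact that $F_5^t$ is a blow-up-type extension). Removal plus the Keevash–Mubayi stability theorem then gives a partition $V_1\cup V_2\cup V_3$ with all but $\epsilon n^3$ edges crossing. Choose the partition maximizing the number of crossing edges.

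By the standard vertex-deletion argument, I may assume every vertex has degree at least $(1/9-\epsilon)n^2$. If a vertex $v$ has small degree, delete it and recurse; the recursion would eventually produce a graph exceeding $\ex(n',F_5^t)$, which is impossible. Maximality of the partition then forces each vertex $v\in V_i$ to have link graph close to complete bipartite between $V_j$ and $V_k$.

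\textbf{Key structural lemma.} Every pair $x,y$ inside a single part $V_i$ has total codegree at most $t$ in $H$. Suppose $xy$ has $t+1$ common neighbours $k_1,\dots,k_{t+1}$. By the near-complete links there are $a\in V_j$ and $b\in V_k$, avoiding the $k_\ell$, with $abx,aby\in E(H)$. Together with the edges $xyk_\ell$ this is a copy of $F_5^t$.

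Consequences of the lemma:
\begin{itemize}[noitemsep]
\item Edges inside a part form an $F_2^t$-free graph.
\item Every edge of type $(2,1)$, meaning two vertices in $V_i$ and one in $V_j$, uses one of the at most $t\binom{|V_i|}{2}$ pair-slots of its part.
\end{itemize}

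\textbf{Main obstacle: exact counting.} The remaining task is to show that edges of type $(2,1)$, and unbalanced part sizes, cannot beat the construction exactly. Write
\[
e(H)\le |V_1||V_2||V_3| \;+\; \sum_i \sum_{\{x,y\}\subset V_i} d_{V_i}(xy)\cdot\tfrac13 \;+\; (\text{type }(2,1)\text{ edges}) \;-\; (\text{missing crossing edges}).
\]
I would show that each type $(2,1)$ edge $xyz$, with $x,y\in V_i$ and $z\in V_j$, forces $\Omega(n)$ missing crossing edges. Sketch: for many $w\in V_k$, the triples $xzw$ and $yzw$ cannot both be present. Otherwise $w,z$ play the roles of $1,2$ with edges $wzx,wzy$, the pair $xy$ gains a crossing-type common neighbour, and combined with its other neighbours this yields an $F_5^t$. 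Alternatively one argues via link-degree counting at $z$.

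This $\Omega(n)$ loss dominates the $O(1)$ gain per pair. Hence the extremal $H$ has no type $(2,1)$ edges, all crossing edges are present, and the parts are balanced, because $\ex(m,F_2^t)=O(m^2)$ is negligible against the $\Theta(n^2)$ loss from imbalance. Making this charging precise, handling a pair counted by several $(2,1)$ edges simultaneously, is the delicate step.
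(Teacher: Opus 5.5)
The paper does not prove this theorem (it is quoted from Balogh--Clemen--Luo), so your outline has to stand on its own. Your lower bound is correct. The upper bound has a genuine gap at the sentence claiming that maximality of the partition forces every link to be nearly complete bipartite between the other two parts. Both your key lemma and your $\Omega(n)$ charging rest on that claim, and it is false even in the near-extremal regime.

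Here is a counterexample. For $t\ge1$ and $n=3a+1$, take $K^3(V_1',V_2,V_3)$ with parts of size $a$, and add a vertex $v$ together with every triple $vxy$ in which $x,y$ lie in the same part.
\begin{itemize}
\item Every common neighbourhood $N(x,y)$ lies inside one of $\{v\},V_1',V_2,V_3$.
\item Two vertices of the same part have codegree $1\le t$, so there is no $F_5^t$.
\item All degrees are at least $a^2\approx n^2/9$, and $v$ has degree about $n^2/6$.
\item All but $O(n^2)$ edges are crossing.
\item For $t=1$, $e(H)=a^3+3\binom a2$, which is within $O(n)$ of the claimed extremal value.
\end{itemize}
Yet wherever $v$ is placed, it lies in no crossing edge. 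Moreover, the pairs $vx$ with $x$ in $v$'s part have codegree $a-1$, which violates your key lemma. So such vertices cannot be excluded by minimum degree plus maximality. You would have to rule them out by an exact, order-$n$ count that uses $e(H)\ge\ex(n,F_5^t)$. Alternatively, pass first to the $(t+3)$-core, which is $F_5$-free and in which $v$ becomes isolated, and then account for the deleted edges pair by pair, as the paper does for Theorem~\ref{thm:main-blow-up}.

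Two further steps need repair.

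First, your justification of the $\Omega(n)$ loss takes $wzx,wzy$ as the edges $123,124$, which makes $xy$ the pair $34$. By your own key lemma $xy$ has codegree at most $t$, so no $F_5^t$ arises. The right assignment uses the $(2,1)$-edge itself: take $xzy,xzw$ as $123,124$ and $yw$ as $34$. Then $xzw\in E(H)$ forces $d(y,w)\le t+2$; this is the observation in the proof of Theorem~\ref{thm:main-blow-up} that $z\in N_{V_k}(u,w)$ implies $d_{V_j}(v,z)\le t+1$. Near-completeness of the link of $y$ then makes $N_{V_k}(x,z)$ small.

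Second, the balancing is not a comparison of a $\Theta(n^2)$ loss against a negligible term. Moving one vertex off a balanced partition costs only about $n/3$ in $|V_1||V_2||V_3|$. Meanwhile $\sum_i\ex(|V_i|,F_2^t)$ can also change by order $n$: for odd $t$ and even $a$, parity of link degrees gives $\ex(a,F_2^t)\le\frac t3\binom a2-\frac a6$. You therefore need the precise bounds of Lemma~\ref{lem:F2t-function} and a shifting computation like the paper's balancing claim.

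A minor point: an $F_5^t$-free graph can contain $\Theta(n^4)$ copies of $F_5$, not $o(n^3)$. The bound $o(n^5)$ is what removal actually needs, and the $(t+3)$-core gives $F_5$-freeness more directly.
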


Our second main result is the Turán number of $F^t_5$ with $\Delta_2\ge m$.
\begin{theorem}\label{thm:main-blow-up}
For every fixed integer $t\ge0$ and every constant $\frac{1}{3} <c<1$, there exists $n_0$ such that the following holds. If $n\ge n_0$ and $m=cn$, then
\[
\ex_{\Delta_2\ge m}(n,F^t_5)
=
m\left\lceil\frac{n-m}{2}\right\rceil
\left\lfloor\frac{n-m}{2}\right\rfloor+\ex \left(m,F^t_2 \right)+\ex \left(\left\lceil\frac{n-m}{2}\right\rceil,F^t_2 \right)+\ex \left(\left\lfloor\frac{n-m}{2}\right\rfloor,F^t_2 \right).
\]
Moreover, the extremal hypergraphs are $K^3_{\,m,\,
\left\lceil \frac{n-m}{2}\right\rceil,\,
\left\lfloor \frac{n-m}{2}\right\rfloor}$ with a copy of $H(|V_i|,t) \in \mathcal{H}(|V_i|,t)$ inside each part.
\end{theorem}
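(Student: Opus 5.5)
The lower bound is by construction. Take $K^3(V_1,V_2,V_3)$ with $|V_1|=m$, $|V_2|=\lceil (n-m)/2\rceil$, $|V_3|=\lfloor (n-m)/2\rfloor$, and place a copy of $H(|V_i|,t)$ inside each $V_i$. Any pair $x\in V_2$, $y\in V_3$ already has codegree at least $m$, so the codegree condition holds. To see that the result is $F_5^t$-free, suppose we have a copy with edges $123,124$ and $34k$ for $5\le k\le 5+t$.
\begin{itemize}
\item If all the edges $34k$ lie inside one part, they form a copy of $F_2^t$ in some $H(|V_i|,t)$, which is impossible.
\item Otherwise some edge $34k$ is crossing. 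Since $3$ and $4$ lie in a common edge, a short case analysis on the positions of $3,4$ and of $1,2$ shows that $123$ and $124$ force the same kind of contradiction. The key facts are that a crossing edge meets each part exactly once, and that an internal edge has all three vertices in one part. This mirrors the verification in \cite{BaloghClemenLuo}.
\end{itemize}

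For the upper bound, let $H$ be $F_5^t$-free with $\Delta_2(H)\ge m$ and at least the stated number of edges. Each $\ex(\cdot,F_2^t)$ term is at most $t\binom{n}{2}/3=O(n^2)$, since $F_2^t$-free means every pair has codegree at most $t$. Hence $e(H)\ge m\lceil\frac{n-m}{2}\rceil\lfloor\frac{n-m}{2}\rfloor - O(n^2)$.

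Next we reduce to the $F_5$ stability result. An $F_5^t$-free graph becomes $F_5$-free after deleting $O(n^2)$ edges: remove every edge $cde$ whose pair $cd$ has codegree at most $t$ and that participates in a copy of $F_5$ as the edge $cde$. This is the standard removal argument in \cite{BaloghClemenLuo}. We must also keep a pair of codegree $\ge m$, or at least $m-O(1)$; this costs only a change of $m$ by $O(1)$, which Theorem~\ref{thm:sta} tolerates after adjusting constants. Applying Theorem~\ref{thm:sta} with $t'=O(n^2)$ then gives a partition $V_1\cup V_2\cup V_3$ with $|V_1|\ge m$ such that $H$ differs from $K^3(V_1,V_2,V_3)$ in $O(n^2)$ edges.

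We then clean up the structure using the standard vertex-deletion (minimum-degree) argument.
\begin{itemize}
\item Repeatedly delete vertices of small link degree and rechoose the partition to maximise the crossing edges. In the resulting hypergraph every vertex has almost full link in its "right" bipartite graph $V_j\times V_k$.
\item Then show that no edge has two vertices in one part and one in another. Such an edge $xyz$ with $x,y\in V_i$, $z\in V_j$ would let us choose $w\in V_k$ with $xw$, $yw$ having huge common neighbourhood into $V_j$. This produces an $F_5$ with $123=xwz'$-type edges and a $t$-blow-up edge $34k$ of many choices, a contradiction.
\item Hence $H=H_{\mathrm{cross}}\cup H_1\cup H_2\cup H_3$, where $H_i\subseteq \binom{V_i}{3}$ is $F_2^t$-free. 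It is $F_2^t$-free because if a pair in $V_i$ had codegree $t+1$ inside $V_i$, two crossing edges through vertices of that pair together with the pair would give $F_5^t$.
\item Therefore $e(H)\le |V_1||V_2||V_3|+\sum_i \ex(|V_i|,F_2^t)$.
\end{itemize}

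The final step is optimisation under the constraint $|V_1|\ge m$. Since $c>1/3$, the product $|V_1||V_2||V_3|$ under this constraint is maximised, with a linear gap per unit of deviation, at $|V_1|=m$ and $V_2,V_3$ balanced. Moving one vertex away from this loses $\Omega(n)$ edges in the product but gains only $O(n)$ in $\sum\ex(\cdot,F_2^t)$, with a worse constant. To make this rigorous, use $\ex(a+1,F_2^t)-\ex(a,F_2^t)\le t a/ \!\cdot$, roughly $ta/2$, and compare it with the product loss of order $(c-1/3)n^2$ per unit shift in $|V_1|$ and $\Omega(n)$ per unit imbalance. Actually the imbalance loss is only $\Theta(n)\cdot|\text{shift}|$, so the comparison there needs care. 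The bad vertices removed earlier must be reinserted, with each contributing fewer than the average degree, which gives equality only with none removed. The exact characterisation of extremal graphs follows.

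\textbf{Main obstacle.} I expect the hardest step to be comparing the balancing of $V_2,V_3$ against the $\ex(\cdot,F_2^t)$ gains, together with transferring the codegree condition through the $F_5$-reduction.
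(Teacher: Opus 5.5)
\textbf{Different route, but with genuine gaps.} Your outline (reduce to $F_5$, get a stability partition, clean by minimum degree, reinsert, optimise) differs from the paper's, which never deletes vertices. The paper takes the $(t+3)$-core $H'$ and applies Lemma~\ref{lem:3-core-sta}. It then bootstraps: the low-codegree pairs of types $X_1,X_2,X_3$ number $O(n)$, then $O(1)$, then $0$. Hence $H'\subseteq K^3(V_1,V_2,V_3)$, and $H\setminus H'$ lies inside the parts and is $F_2^t$-free there. Several of your steps do not go through as written.

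First, the $F_5$-reduction need not keep a pair of codegree $m-O(1)$. An edge $abx$ with $x\in M$ is deleted whenever $d_H(a,x)$ is at most your threshold and $L_H(a)$, $L_H(x)$ share an edge avoiding $b$. Before any structure is known, nothing bounds the number of such $x$, so Theorem~\ref{thm:sta} cannot be used as a black box. The threshold must also be $t+2$ rather than $t$, since $N_H(c,d)$ may contain $a,b$. The paper's fix: in the $(t+3)$-core every shadow pair has $H$-codegree at least $t+3$. So the core is $F_5$-free and $M=N_H(a,b)$ is strongly independent in it, which is what the proofs of Lemmas~\ref{lem:k4free} and~\ref{lem:3-core-sta} actually use.

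Second, consider your exclusion of an edge $xyz$ with $x,y\in V_i$, $z\in V_j$. The crossing edges $wz'x$, $wz'y$ you describe make $xy$ the pair $\{3,4\}$, which needs $t+1$ edges on $xy$; a single such edge suffices only when $t=0$. The right copy is $\{xzy,xzw\}\cup\{ywu:u\in U\}$ with $w\in N_{V_k}(x,z)$, $U\subseteq N_{V_j}(y,w)\setminus\{z\}$ and $|U|=t+1$. A minimum-degree condition does not ensure that the specific pairs $xz$, $yz$ have any neighbours in $V_k$. So you need a charging argument against missing crossing triples (the paper's sets $S$, $X_2^1$, $X_2^2$).

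Third, degrees are not uniform. Since $c>1/3$, vertices of $V_1$ have degree about $|V_2||V_3|$, below the average. So the deletion threshold and the reinsertion comparison must be made against $|V_2||V_3|$. You must also keep $a,b$ (or otherwise retain $M\subseteq V_1$) to recover $|V_1|\ge m$.

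\textbf{The balancing step.} The step you flag as the main obstacle is a genuinely missing ingredient, and the increment bound you suggest is not enough. Moving a vertex from $V_2$ to $V_3$ when $a_2\ge a_3+2$ gains only $m(a_2-a_3-1)$ in the product. Suppose you use $\ex(a_2,F_2^t)-\ex(a_2-1,F_2^t)\le t(a_2-1)/2$ and $\ex(a_3+1,F_2^t)-\ex(a_3,F_2^t)\ge 0$. The loss you can then bound is about $t(1-c)n/4$, which exceeds the gain $cn$ once $t>4c/(1-c)$.

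What is needed is the two-sided estimate of Balogh--Clemen--Luo (Lemma~\ref{lem:F2t-function}): $\frac t3\binom a2-\frac a3-\frac{t^2}6-3t\le\ex(a,F_2^t)\le\frac t3\binom a2$. It pins each increment $\ex(a+1,F_2^t)-\ex(a,F_2^t)$ to $\frac{ta}{3}$, up to an error of about $\frac a3+O(t^2)$. The net change of a balancing move is then at least $(m-\frac t3)(a_2-a_3-1)-\frac{n-m}{3}-\frac{t^2}{3}-6t\ge\frac{(4c-1)n}{3}-O(t^2)>0$. This, together with the easier comparison in $|V_1|$, is how the paper settles the optimisation, via its balancing claim and the monotonicity of $\Phi_t(n,\cdot)$.
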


For fixed $t\ge0$, let
\[
\Phi_t(n,m)
=m\left\lceil\frac{n-m}{2}\right\rceil
\left\lfloor\frac{n-m}{2}\right\rfloor+\ex(m,F_2^t)
+\ex\!\left(\left\lceil\frac{n-m}{2}\right\rceil,F_2^t\right)
+\ex\!\left(\left\lfloor\frac{n-m}{2}\right\rfloor,F_2^t\right),
\]
denote the expression in Theorem \ref{thm:main-blow-up}.

For a partition $V=V_1\cup V_2\cup V_3$, let
$\mathcal T_t(V_1,V_2,V_3)$ denote the family of all hypergraphs of the form
\[
K^3(V_1,V_2,V_3)\cup J_1\cup J_2\cup J_3,
\]
where each $J_i$ is an extremal $F_2^t$-free $3$-graph on $V_i$.
The following stability theorem is the corresponding near-extremal version of
Theorem~\ref{thm:main-blow-up}.

\begin{theorem}[Stability for $F_5^t$]\label{thm:sta-blow-up}
For every fixed integer $t\ge0$, every constant $\frac{1}{3} <c<1$, and every
constant $C>0$, there exist constants $C'>0$ and $n_0$ such that the following
holds. Let $n\ge n_0$ and $m=cn$, and let $H$ be an $F_5^t$-free $3$-uniform
hypergraph on $n$ vertices with $\Delta_2(H)\ge m$. Suppose that
$0\le \tau\le Cn^3$ and
\[
|E(H)|\ge \Phi_t(n,m)-\tau.
\]
Then there exist a partition
\[
V(H)=V_1\cup V_2\cup V_3
\]
and a hypergraph $G\in\mathcal T_t(V_1,V_2,V_3)$ such that
\[
|E(H)\,\Delta\,E(G)|\le C'\tau.
\]
%In particular, if $\tau=0$, then $H$ is one of the extremal hypergraphs from Theorem~\ref{thm:main-blow-up}.
\end{theorem}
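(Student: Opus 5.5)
The plan is to reduce to the stability theorem for $F_5$ (Theorem~\ref{thm:sta}) by deleting a sparse set of edges, and then to handle the edges inside the parts using the $F_2^t$-freeness that $F_5^t$-freeness forces there. Since $\ex(k,F_2^t)\le t\binom{k}{2}/3=O(n^2)$ for fixed $t$, we have $\Phi_t(n,m)=\Phi_0(n,m)+O(n^2)$. A counting or removal argument gives an $F_5$-free subhypergraph $H'\subseteq H$ with $|E(H)\setminus E(H')|=O(n^2)$. One way is to note that every copy of $F_5$ in an $F_5^t$-free graph has its "$cd$" pair of codegree at most $t+2$; deleting all edges through pairs of codegree at most $t+2$ costs at most $(t+2)\binom n2$ edges and destroys every $F_5$. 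To preserve $\Delta_2(H')\ge m$, we keep the edges of the high-codegree pair $xy$, since that pair has codegree $m>t+2$ and so is not among the deleted pairs.

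Next we apply Theorem~\ref{thm:sta} to $H'$ with $t'=\tau+O(n^2)$, using $|E(H')|\ge\Phi_0(n,m)-\tau-O(n^2)$. This yields a partition $V_1\cup V_2\cup V_3$ with $|V_1|\ge m$ and $|E(H)\,\Delta\,E(K^3(V_1,V_2,V_3))|\le C'(\tau+n^2)$. When $\tau\ge n^2$ this is already $O(\tau)$. Any $G\in\mathcal T_t$ differs from $K^3$ by only $O(n^2)\le O(\tau)$ edges, so the claim follows in that range.

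The main obstacle is the regime $\tau<n^2$, where an additive $O(n^2)$ error is unaffordable. There I would run a cleaning argument. First, using the near-complete tripartite structure, define "bad" vertices whose link deviates from the link in $K^3$ by at least $\varepsilon n^2$. Their number should be $O(\tau/n^2)$, and for $\tau<n^2$ it is small. Moving each vertex to its best part and handling bad vertices separately, one shows the following. Every edge with two vertices in one part $V_i$ and one in $V_j$ creates, together with the many crossing edges, a copy of $F_5^t$ unless some crossing edges are missing, namely $\Omega(n)$ missing crossing edges per such "bad" edge. Hence the number of non-tripartite edges, other than those lying entirely inside parts, is $O(\tau/n)+$ the missing edge count, i.e.\ $O(\tau)$.

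For the edges inside a part: if $abc\in E(H[V_i])$ with all vertices typical, and the pair $ab$ had codegree greater than $t$ inside $V_i$, then taking $a,b$ as the pair "34" together with two vertices $1,2$ from the other parts forming edges $12a$ and $12b$ yields $F_5^t$. So $H[V_i]$ is essentially $F_2^t$-free and $|E(H[V_i])|\le\ex(|V_i|,F_2^t)$. Comparing edge counts, $|E(H)|$ is at most $|V_1||V_2||V_3|+\sum_i\ex(|V_i|,F_2^t)$ minus the missing crossing edges. We then use the facts that the product $|V_1||V_2||V_3|$ with $|V_1|\ge m$ is maximized at $(m,\lceil\frac{n-m}2\rceil,\lfloor\frac{n-m}2\rfloor)$ with a quadratic-in-deviation deficit of order $n\cdot$deviation, and that the $\ex$ terms vary by only $O(n)$ per unit shift. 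These show that the part sizes are within $O(\tau/n)$ of the optimum and that the missing crossing edges number $O(\tau)$.

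Finally, $H[V_i]$ differs from some extremal $F_2^t$-free graph $J_i$ by $O(\tau)$ edges. This needs a stability statement for $F_2^t$ (codegree at most $t$): a graph with $\ex(k,F_2^t)-s$ edges is within $O(s)$ of an extremal one. I would either assume this or derive it from the structure of designs or packings. Changing sizes to a deviation of $O(\tau/n)$ vertices costs $O(\tau)$ more edges. Combining the bounds gives $G\in\mathcal T_t(V_1,V_2,V_3)$ with $|E(H)\,\Delta\,E(G)|\le C'\tau$.
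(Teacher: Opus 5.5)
Your outline (delete the edges through pairs of codegree at most $t+2$, invoke the $F_5$ stability theorem, then clean up) matches the paper's in spirit. However, the regime $\tau<n^2$, which carries all the content, is asserted rather than proved.

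After applying Theorem~\ref{thm:sta} with deficit $\tau+O(n^2)$, you only know that $|E(H)\,\Delta\,E(K^3(V_1,V_2,V_3))|=O(n^2)$. This gives $O(1)$ exceptional vertices, not $O(\tau/n^2)$, and each of them can carry $\Theta(n^2)$ missing or misplaced triples, far more than $C'\tau$. Nothing in your sketch shows that an exceptional vertex costs $\Omega(n^2)$ edges relative to $\Phi_t(n,m)$. Any such argument would also have to keep track of $\Delta_2\ge m$, because that is the only thing forcing a part of size at least $m$. After ``moving each vertex to its best part'' nothing guarantees such a part, and without it $|V_1||V_2||V_3|+\sum_i\ex(|V_i|,F_2^t)$ is not bounded by $\Phi_t(n,m)$.

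The paper obtains this control by a bootstrap. It splits pairs into classes $X_1,\dots,X_4$. Using $F_5^t$-freeness and the current number of missing triples, it shows that the low-codegree pairs outside $X_4$ number $O(n)$, then $O(1)$, then $0$; the $X_4$ pairs carry the inside-part $F_2^t$-type edges, which are paid for by the $\ex(\cdot,F_2^t)$ terms. Consequently the $F_5$-free core has deficit $\tau+O(n)$, then $\tau+O(1)$, and Lemma~\ref{lem:3-core-sta} is re-applied at each stage. The $O(t/n^2)$ bound on exceptional vertices is produced inside that lemma, through the number $q$ of deleted $K_4$'s and through $|\overline M\setminus S|$ in Lemma~\ref{lem:k4free}.

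Your charging step also does not close. A missing triple $uwz$ can be charged by up to $|V_i|$ non-tripartite edges $uvw$ with $v\in V_i$, so ``$\Omega(n)$ missing triples per bad edge'' bounds nothing by itself. Moreover, to extract $\#\text{missing}=O(\tau)$ from $|E(H)|\le|V_1||V_2||V_3|+\sum_i\ex(|V_i|,F_2^t)-\#\text{missing}+\#\text{bad}$, you need $\#\text{bad}\le(1-\delta)\,\#\text{missing}$, not ``$O(\tau/n)+\#\text{missing}$''.

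There are two further problems. First, in the reduction the edges of the pair $xy$ are not protected. Under your rule, $xyz$ is deleted as soon as $xz$ or $yz$ has codegree at most $t+2$, so $d_{H'}(x,y)$ may drop below $m$. If you instead exempt these edges, then for $t\ge1$ the hypergraph $H'$ need not be $F_5$-free: a copy $\{abx,abz,xzy\}$ with $|N_H(x,z)\setminus\{a,b\}|\le t$ is not excluded by $F_5^t$-freeness. So Theorem~\ref{thm:sta} cannot be used as a black box. What does survive is that $N_H(x,y)$ is strongly independent in $H'$: a pair $zz'$ of $\partial H'$ has $d_H(z,z')\ge t+3$ and would complete an $F_5^t$ with $xyz,xyz'$. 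This is the property the paper feeds into Lemmas~\ref{lem:k4free} and~\ref{lem:3-core-sta}.

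Second, your last step rests on a linear stability theorem for $F_2^t$-free $3$-graphs: that one with $\ex(k,F_2^t)-s$ edges lies within $O(s)$ edges of an extremal one. You neither prove nor cite this. Already for $t=1$ it is a nontrivial statement about repairing near-maximum partial Steiner triple systems with $O(s)$ changes, so the proposal is incomplete there as well.
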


By comparison, Balogh, Clemen, and Luo \cite{BaloghClemenLuo} proved the following tability result for $F^t_5$.
\begin{quotedtheorem}[\cite{BaloghClemenLuo}]
For every $t \geq 0$ and $\varepsilon>0$, there exists $\delta>0$ and $n_0>0$ such that for every $n \geq n_0$ the following statement holds. Let $H$ be an $F^t_5$-free $3$-uniform hypergraph on $n$ vertices with at least 
$(\frac{1}{27}-\delta)n^3$ edges, then there is a partition of the vertex
set of $H$ as $V=V_1\cup V_2 \cup V_3$ so that all but at most $\varepsilon n^3$ edges of $H$ have one point in each $V_i$.
\end{quotedtheorem}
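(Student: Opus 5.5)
The plan is to reduce to the $F_5$ case and then apply the Keevash--Mubayi stability theorem quoted above; no regularity or removal machinery is needed. The key observation is that a copy of $F_5$ whose ``special'' pair has large codegree already extends to a copy of $F_5^t$. Suppose $abc,abd,cde$ is a copy of $F_5$ and the pair $cd$ has codegree at least $t+3$. Then, discarding $a$ and $b$, the pair $cd$ still has at least $t+1$ common neighbours $k_0,\dots,k_t$ distinct from $a,b,c,d$. The edges $abc,abd$ together with $cdk_0,\dots,cdk_t$ then form a copy of $F_5^t$, with $3,4$ playing the roles of $c,d$ and $1,2$ the roles of $a,b$.

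\textbf{Cleaning step.} Starting from the $F_5^t$-free hypergraph $H$, repeatedly pick a pair $uv$ whose current codegree lies in $\{1,\dots,t+2\}$, and delete one edge containing $uv$. Continue until no such pair exists, and let $H'$ be the result.

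Codegrees only decrease during this process. So once a pair has codegree at most $t+2$ it stays there, and it can be charged for at most $t+2$ deletions. Hence
\[
|E(H)\setminus E(H')|\le (t+2)\binom n2 .
\]
In $H'$ every pair has codegree either $0$ or at least $t+3$. Also $H'\subseteq H$, so $H'$ is $F_5^t$-free.

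\textbf{$H'$ is $F_5$-free.} Any copy $abc,abd,cde$ of $F_5$ in $H'$ has $cd$ of positive codegree, hence of codegree at least $t+3$ in $H'$. By the observation above, $H'$ would then contain $F_5^t$, a contradiction.

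\textbf{Applying Keevash--Mubayi.} Given $\varepsilon>0$, apply the Keevash--Mubayi stability theorem with $\varepsilon/2$ to obtain some $\delta_0$, and set $\delta=\delta_0/60$. Note that
\[
\lfloor \tfrac n3\rfloor\lfloor \tfrac{n+1}3\rfloor\lfloor \tfrac{n+2}3\rfloor=\tfrac{n^3}{27}-O(n^2).
\]
So for $n\ge n_0$, the graph $H'$ has at least
\[
(\tfrac1{27}-\delta)n^3-O_t(n^2)\ge (1-\delta_0)\lfloor \tfrac n3\rfloor\lfloor \tfrac{n+1}3\rfloor\lfloor \tfrac{n+2}3\rfloor
\]
edges. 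The theorem then yields a partition $V_1\cup V_2\cup V_3$ such that all but at most $\tfrac{\varepsilon}{2}n^3$ edges of $H'$ meet every part.

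\textbf{Returning to $H$.} Add back the at most $(t+2)n^2\le \tfrac{\varepsilon}{2}n^3$ deleted edges (for large $n$). Then all but at most $\varepsilon n^3$ edges of $H$ are crossing, as required.

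The only point needing care is the charging argument in the cleaning step. A deletion reduces the codegrees of three pairs at once, so the bound must be phrased per trigger pair, which is what the argument above does. The rest is bookkeeping of the $O(n^2)$ error against $\varepsilon n^3$.
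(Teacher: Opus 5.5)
Your proof is correct. The paper does not prove this statement; it only quotes it from Balogh--Clemen--Luo. Your reduction is, however, exactly the device the paper itself uses in the proof of Theorem~\ref{thm:main-blow-up}:
\begin{enumerate}[(i)]
\item pass to the $(t+3)$-core $H'$, losing at most $(t+2)\binom{n}{2}$ edges;
\item observe that any copy $abc,abd,cde$ of $F_5$ in $H'$ extends to a copy of $F_5^t$, because $cd$ has at least $t+1$ common neighbours outside $\{a,b\}$;
\item invoke an $F_5$ result on the core (the paper uses Theorem~\ref{thm:main} and Lemma~\ref{lem:3-core-sta}; you use the Keevash--Mubayi stability theorem).
\end{enumerate}

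Your per-trigger-pair charging justifies the $(t+2)\binom{n}{2}$ bound properly. The rest of the bookkeeping is also sound. Any $\delta<\delta_0/27$ works, since the balanced tripartite count is at most $n^3/27$. The $O_t(n^2)$ deleted edges are then absorbed into the $\varepsilon n^3/2$ allowance for large $n$.
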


The rest of the paper is organized as follows. Section 2 contains the notation while Section 3 contains the proofs of Theorems~\ref{thm:main} and~\ref{thm:sta}. Section 4 proves Theorems~\ref{thm:main-blow-up} and~\ref{thm:sta-blow-up}.

\section{Notation and Lemmas}

A \emph{$3$-uniform hypergraph} (or \emph{$3$-graph}) $H=(V,E)$ consists of a vertex set $V$ and an edge set $E\subseteq \binom{V}{3}$. Throughout the paper, all hypergraphs are assumed to be $3$-uniform unless stated otherwise. We write $V(H)$ and $E(H)$ for the vertex set and edge set of $H$, respectively, and denote by $v(H)=|V(H)|$ and $e(H)=|E(H)|$ the numbers of vertices and edges.

A hypergraph $H'$ is a \emph{subhypergraph} of $H$ if
\[
V(H')\subseteq V(H)
\quad \text{and} \quad
E(H')\subseteq E(H).
\]

We say that $H$ is \emph{$F$-free} if it contains no subhypergraph isomorphic to $F$.

For a vertex $v\in V(H)$, the \emph{neighborhood} of $v$ is the set
\[
N_H(v)=\big\{\{x,y\}\in \binom{V(H)\setminus\{v\}}{2}: \{v,x,y\}\in E(H)\big\}.
\]
The \emph{degree} of $v$ is
\[
d_H(v)=|N_H(v)|.
\]

For a vertex $v\in V(H)$, the \emph{link graph} of $v$, denoted by $L_H(v)$, is the graph with vertex set
\[
V(L_H(v))=V(H)\setminus\{v\}
\]
and edge set
\[
E(L_H(v))
=
\Big\{
xy\in \binom{V(H)\setminus\{v\}}{2}
:
vxy\in E(H)
\Big\}.
\]
Thus the edges of $L_H(v)$ correspond precisely to the hyperedges of $H$ containing $v$. In particular,
\[
e(L_H(v))=d_H(v).
\]

When the ambient hypergraph is clear from the context, we omit the subscript $H$ and write $N(v)$, $d(v)$, and $L(v)$.

For a pair of distinct vertices $u,v\in V(H)$, the \emph{neighborhood} of $\{u,v\}$ is
\[
N_H(u,v)=\{w\in V(H): uvw\in E(H)\}.
\]
The \emph{codegree} of the pair $\{u,v\}$ is
\[
d_H(u,v)=|N_H(u,v)|.
\]
The \emph{maximum codegree} of $H$ is defined by
\[
\Delta_2(H)=\max_{\{u,v\}\in \binom{V(H)}{2}} d_H(u,v).
\]

The \emph{shadow graph} (or \emph{$2$-shadow}) of $H$, denoted by $\partial H$, is the graph on vertex set $V(H)$ whose edge set consists of all pairs of vertices that are contained in some hyperedge of $H$, namely
\[
E(\partial H)
=
\Big\{
uv\in \binom{V(H)}{2}:
\exists\, w\in V(H)\text{ such that } uvw\in E(H)
\Big\}.
\]

A subset $S\subseteq V(H)$ is called \emph{strongly independent} in $H$ if it is an independent set in the shadow graph $\partial H$. Equivalently, no two vertices of $S$ are contained in a common hyperedge of $H$. That is,
\[
\binom{S}{2}\cap E(\partial H)=\emptyset.
\]

In other words, every hyperedge of $H$ contains at most one vertex from $S$.
For pairwise disjoint sets $V_1,V_2,V_3$, we write
\[
K^3(V_1,V_2,V_3)
\]
for the complete tripartite $3$-uniform hypergraph with parts $V_1,V_2,V_3$, whose edge set consists of all triples having exactly one vertex in each part. When $|V_i|=n_i$ for $i\in[3]$, we also write
\[
K^3_{n_1,n_2,n_3}.
\]

\section{Proof of Theorem \ref{thm:sta} and Theorem~\ref{thm:main}}
Let $r=\Delta_2(H)\ge m$. For a fixed $r$, it is easy to see that the product $|V_1||V_2||V_3|$ is maximized when the two remaining parts are as equal as possible. Now we will show that the function $f(n,r)$ is nonincreasing for $r\ge \lceil n/3\rceil$. 

Indeed, if $n-r=2a$, then
\[
f(n,r)-f(n,r+1)=ra^2-(r+1)a(a-1)=a(r+1-a)\ge0,
\]
and if $n-r=2a+1$, then
\[
f(n,r)-f(n,r+1)=ra(a+1)-(r+1)a^2=a(r-a)\ge0.
\]

A $k$-core of a hypergraph $H$ is a maximal subhypergraph of $H$ in which every pair $uv$ satisfies either $d_{uv}\ge k$ or $d_{uv}=0$.

Define
\[
f(n,m)=m\left\lceil \frac{n-m}{2}\right\rceil \left\lfloor \frac{n-m}{2}\right\rfloor.
\]

\begin{lemma}\label{lem:k4free}
Suppose that $H$ is an $F_5$-free $3$-uniform hypergraph on $n$ vertices containing a strongly independent set $M$ of size $m$, where $m\ge cn$ and $\frac{1}{3} <c < 1$ is fixed. Assume further that the shadow graph $\partial(H)$ is $K_4$-free. Then
\[
|E(H)|\le f(n,m).
\]

If 
\[
|E(H)|\ge f(n,m)-t,
\]
then there exists a partition 
\[
V(H)=V_1\cup V_2\cup V_3
\]
such that $V_1=M$ and
\[
|E(H) \, \Delta \, E(K^3(V_1,V_2,V_3))|
\le O(t).
\]
\end{lemma}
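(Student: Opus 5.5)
The approach is to split the edges by how they meet $M$ and bound each part. Write $W=V(H)\setminus M$, $N=|W|=n-m$, and $G=\partial H[W]$. Since $M$ is strongly independent, every edge of $H$ has either exactly one vertex in $M$ (type 1) or none (type 0). So
\[
|E(H)|=\sum_{v\in M} e(L(v))+h_0,
\]
where $h_0=e(H[W])$ and each link $L(v)$, $v\in M$, is a graph on $W$ contained in $G$.

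The first step uses the $K_4$-freeness of $\partial H$. If $L(v)$ contained a triangle $xyz$, then $v,x,y,z$ would span a $K_4$ in $\partial H$. Hence every $L(v)$ is triangle-free, and Mantel gives $e(L(v))\le \lfloor N^2/4\rfloor$. By the same argument, for every type-0 edge $xyz$ and every $v\in M$, the vertex $v$ is a shadow-neighbour of at most two of $x,y,z$. Therefore $L(v)$ contains at most one of the pairs $xy,yz,xz$. If $h_0=0$ this already yields $|E(H)|\le m\lfloor N^2/4\rfloor=f(n,m)$.

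The second step, which I expect to be the main obstacle, is the trade-off showing that type-0 edges cost more in the links than they contribute. The plan is to apply F\"uredi's stability theorem to each triangle-free $L(v)$: if $e(L(v))\ge N^2/4-s_v$, then $L(v)$ is within $3s_v$ edges of a complete bipartite graph $K(A_v,B_v)$ on $W$. Every type-0 triangle has at least two pairs crossing $(A_v,B_v)$, and $L(v)$ keeps at most one of them. So each type-0 edge forces a missing cross pair of $K(A_v,B_v)$, and this must be charged against $s_v$. The difficulty is that one missing pair can serve many type-0 edges. To control this I will use $F_5$-freeness inside $W$. If $xyz,xyw\in H[W]$, then no edge contains both $z$ and $w$ (together with a fifth vertex $e$ this would form an $F_5$). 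Hence the type-0 neighbourhoods $N_H(x,y)\cap W$ are strongly independent, which should bound how many type-0 triangles a single missing pair can block. I would first try to prove the following for each $v$:
\[
e(L(v))\le \frac{N^2}{4}-\frac{\alpha\, h_0}{N}
\]
for some $\alpha$. Summing over $v$ then gives a total loss of $\alpha m h_0/N$. This exceeds the gain $h_0$ as long as $\alpha m>N$. If this per-vertex estimate is too weak, since $m>N/2$ is all we know, I would instead average over $v$. The idea is to use that the bipartitions $(A_v,B_v)$ of almost-extremal links must essentially coincide: two very different near-balanced bipartite links would create a $K_4$ in the shadow together with the dense type-1 structure. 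With a common bipartition $(V_2,V_3)$, every type-0 edge would have to avoid a positive fraction of the links' cross pairs, giving a loss of order $m h_0$ overall, far larger than $h_0$.

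For the stability part, I will track the deficits. Write $|E(H)|\ge f(n,m)-t$. The argument above gives $\sum_v s_v + h_0\le O(t)$, and I take $V_1=M$. A common near-bipartition $(V_2,V_3)$ of $W$ is obtained by fixing one $v$ with $s_v\le O(t/m)$. For the other links, comparing each $L(v)$ with $K(V_2,V_3)$ costs at most $O(s_v)$ plus a contribution controlled again by $K_4$-freeness. Summing these gives $|E(H)\,\Delta\,E(K^3(M,V_2,V_3))|\le O(t)$.
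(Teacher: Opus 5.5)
\paragraph{Verdict: genuine gap in the trade-off step.}
Several parts of your setup are correct:
\begin{itemize}
\item the split into type-0 and type-1 edges;
\item triangle-freeness of each $L(v)$;
\item the fact that $L(v)$ contains at most one pair of any type-0 edge;
\item the case $h_0=0$.
\end{itemize}
Your stability sketch (fix one good link, then use $K_4$-freeness to push the other links onto its bipartition) is essentially the paper's Claim~\ref{cla:edge-inside}. The gap is the trade-off step. It is the whole content of the lemma, and you only state it as something you would try.

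\paragraph{The required constant is sharp.}
Take $H=K^3(M\cup R,A,B)$ with $R\subseteq W$, $|R|=r$ small and $|A|=|B|=(N-r)/2$. It satisfies all hypotheses, has $h_0=r(N-r)^2/4\approx rN^2/4$, and has total link deficit $mr(2N-r)/4\approx (2m/N)h_0$. So your $\alpha$ can be at most about $2$, while you need $\alpha>N/m=(1-c)/c$, which tends to $2$ as $c\to 1/3$. The same example shows your fallback claim of a total loss of order $mh_0$ is false: the loss is only of order $mh_0/N$.

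\paragraph{Why your charging scheme falls short.}
It cannot reach this constant, for two reasons.
\begin{itemize}
\item A type-0 triple lying inside one side of $(A_v,B_v)$ has no crossing pair at all. So ``at least two crossing pairs'' fails, and such triples are never charged.
\item For 2--1 triples, F\"uredi only bounds the missing cross pairs by $3s_v$. Even granting $|N_H(x,z)\cap W|\lesssim N/2$ for each missing pair $xz$, you get only $\alpha\approx 2/3$, which suffices only for $c>3/5$. The example shows $N/2$ is best possible, and strong independence gives this bound only approximately and only once $L(v)$ is already close to complete bipartite.
\end{itemize}

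\paragraph{The missing idea: account by vertices outside a link's support.}
Do this for a single vertex. Since type-1 edges number at most $m\max_v e(L(v))$, it suffices to treat $v_0\in M$ of maximum degree. Let $F=L(v_0)$, let $S$ be its support and $s=|S|$.
\begin{itemize}
\item By $K_4$-freeness, $H[S]$ has no edge, so every type-0 edge meets $W\setminus S$.
\item By $F_5$-freeness, a pair $yz\subseteq S$ with a common $F$-neighbour $x$ lies in no edge $yzw$ with $w\in W\setminus S$; otherwise $v_0xy,\,v_0xz,\,yzw$ is an $F_5$.
\end{itemize}
These two facts also dispose of your same-side triples.

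\paragraph{How the paper finishes.}
It lower-bounds the number of such pairs by Cauchy--Schwarz and a $4$-cycle count in the triangle-free $F$. At $|F|=s^2/4$ this gives
$h_0\le (N-s)\bigl(s^2/4+s(N-s)/4+(N-s)^2/27\bigr)$, against a link deficit of $m(N^2-s^2)/4$. The paper then shows the resulting bound is maximized at $|F|=s^2/4$ and is below $f(n,m)$ unless $s=N$, in which case $h_0=0$. Heuristically, each vertex of $W\setminus S$ costs about $mN/2$ link edges and buys at most about $N^2/4$ type-0 edges. This is exactly where $m>N/2$, i.e.\ $c>1/3$, enters.
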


\begin{proof}
Since $M$ is strongly independent, every edge of $H$ contains at most one vertex of $M$. Hence every edge either contains exactly one vertex of $M$ or is entirely contained in $\overline M:=V(H)\setminus M$.

Choose a vertex $v_0\in M$ with maximum degree among all vertices of $M$. Let
\[
F=E(L_H(v_0))
\subseteq
\binom{\overline M}{2},
\]
and let $S\subseteq \overline M$ denote the set of vertices incident with at least one edge of $F$. Write $s=|S|$.

Since $L_H(v_0)$ is triangle-free, Mantel's theorem yields
\[
|F|
\le
\left\lfloor\frac{s}{2}\right\rfloor
\left\lceil\frac{s}{2}\right\rceil
\le
\frac{s^2}{4}.
\]

\begin{claim}\label{clm:no-edge-in-S}
The induced subhypergraph $H[S]$ contains no edge.
\end{claim}

\begin{proof}
Suppose that $abc\in E(H[S])$. Since $a,b,c\in S$, each of $a,b,c$ is adjacent to $v_0$ in the shadow graph $\partial(H)$. Furthermore, the pairs
\[
ab,\quad ac,\quad bc
\]
belong to $E(\partial(H))$ because $abc$ is a hyperedge.

Therefore
\[
\partial(H)[\{v_0,a,b,c\}]
\cong K_4,
\]
contradicting the assumption that $\partial(H)$ is $K_4$-free.
\end{proof}

We classify the edges of $H$ into four types.

\medskip

\noindent
{\bf Type I.}
Edges containing exactly one vertex of $M$.

Since $v_0$ has maximum degree among vertices of $M$,
\[
d(v)\le d(v_0)=|F|
\qquad
\text{for all }v\in M.
\]
Hence the number of Type I edges is at most
\[
\sum_{v\in M} d(v)
\le m|F|.
\]

\medskip

\noindent
{\bf Type II.}
Edges contained in $\overline M$ having two vertices in $S$ and one vertex in $\overline M\setminus S$.

Let $T$ be the set of pairs $yz\in \binom{S}{2}$ for which there exists a vertex $x\in S$ such that
\[
xy,xz\in F.
\]
By Claim~\ref{clm:no-edge-in-S}, no pair in $T$ can appear in an edge of Type II.

Let $a_v$ denote the degree of $v$ in the link graph $L_H(v_0)$. Then
\[
\sum_{v\in S} a_v =2|F|.
\]

For a pair $ij\in \binom{S}{2}$, let $b_{ij}$ denote the number of common neighbors of $i$ and $j$ in $L_H(v_0)$.
We have
\begin{align}
    \sum_{ij\in \binom{S}{2}} b_{ij} &=\sum_{v\in S} \binom{a_v}{2}. \label{eq:bij}\\
    \sum_{ij\in T} \binom{b_{ij}}{2} &= 2\#C_4. \label{eq:bij2}
\end{align}
Here $\#C_4$ denotes the number of induced copies of $C_4$ in $L_H(v_0)$. Since $L_H(v_0)$ is $K_3$-free, we have
\begin{equation}
    4\#C_4 \leq \sum_{uv\in F}(a_u-1)(a_v-1)
    \leq  \sum_{uv\in F}\frac{(s-2)^2}{4}
    \leq |F|\frac{(s-2)^2}{4}.
    \label{eq:C4}
\end{equation}
Applying the Cauchy--Schwarz inequality to \eqref{eq:bij},
we get
\[\sum_{ij\in \binom{S}{2}} b_{ij} \geq
     \frac{2|F|^2}{s} -|F|.
\]
We have
\begin{align*}
    |T| &\geq \frac{\left(\sum_{ij\in T} b_{ij}\right)^2}{\sum_{ij\in T} b_{ij}^2}\\
    &= \frac{\left(\sum_{ij\in T} b_{ij}\right)^2}{4\#C_4+ \sum_{ij\in T}b_{ij}}\\
    &\geq \frac{\left(\frac{2|F|^2}{s} -|F| \right)^2}{|F|(s-2)^2/4+ \frac{2|F|^2}{s} -|F|}\\
    &= \frac{\left(\frac{2|F|}{s} -1 \right)^2|F|}{(s-2)^2/4+ \frac{2|F|}{s} -1}.\\
\end{align*}
\medskip

\noindent
{\bf Type III.}
Edges contained in $\overline M$ having exactly one vertex in $S$.

Applying the trivial bound, the number of such edges is at most
\[
s \frac{(n-m-s)^2}{4}.
\]

\medskip

\noindent
{\bf Type IV.}
Edges contained in $\overline M\setminus S$.

Since $\partial(H[\overline M\setminus S])$ is $K_4$-free, every hyperedge of
$H[\overline M\setminus S]$ corresponds to a triangle in the shadow graph.
By Turán's theorem, the number of triangles in a $K_4$-free graph on
$n-m-s$ vertices is at most
\[
\left(\frac{n-m-s}{3}\right)^3.
\]
Hence the number of Type IV edges is at most
\[
\left(\frac{n-m-s}{3}\right)^3.
\]

Combining the four estimates gives
\begin{align*}
|E(H)|
&\le
m|F|
+
\left(
\binom{s}{2}
-
\frac{\left(\frac{2|F|}{s} -1 \right)^2|F|}{(s-2)^2/4+ \frac{2|F|}{s} -1}
\right)(n-m-s)\\
&\qquad
+
s\frac{(n-m-s)^2}{4}
+
\left(\frac{n-m-s}{3}\right)^3.
\end{align*}

Let $|F|=x$ and define
\[
g(x,s)
=mx
-
\frac{\left(\frac{2x}{s} -1 \right)^2x}{(s-2)^2/4+ \frac{2x}{s} -1}
(n-m-s)
+
\frac{s(s-1)(n-m-s)}{2}
+
\frac{s(n-m-s)^2}{4}
+
\left(\frac{n-m-s}{3}\right)^3.
\]

We first claim that
\[
s>\frac{26}{31}(n-m).
\]

Suppose instead that
\[
s\le \frac{26}{31}(n-m).
\]
Since every edge containing a vertex of $M$ contributes to the degree of a vertex in $M$,
and the number of edges in $H[\overline M]$ is at most $\left(\frac{n-m}{3}\right)^3$,
\[
|E(H)|
\le
m|F|
+
\left(\frac{n-m}{3}\right)^3.
\]
Using $|F|\le s^2/4$, we obtain
\begin{align*}
|E(H)|
&\le
m\frac{s^2}{4}
+
\left(\frac{n-m}{3}\right)^3\\
&\le
\frac{13^2}{31^2}m(n-m)^2
+
\frac{2}{27}m(n-m)^2\\
&<
0.24994\,m(n-m)^2\\
&<
f(n,m)-Cn^2
\end{align*}
for sufficiently large $n$, a contradiction.

Hence
\[
s>\frac{26}{31}(n-m).
\]

Since $s\geq \frac{26}{31}(n-m)$, we have
$n-m-s\leq \frac{5}{31}(n-m)$ and
$m\geq \frac{n-m}{2}>3.1 (n-m-s)$.

We show that 
\[g(x,s)\leq g(\frac{s^2}{4},s)  \qquad \mbox{ for } x\leq \frac{s^2}{4},\]

Indeed,
\begin{align*}
g\left(\frac{s^2}{4},s\right)-g(x,s)
&=m\left(\frac{s^2}{4}-x\right)
-(n-m-s)\left(Q\left(\frac{s^2}{4}\right)-Q(x)\right),
\end{align*}
where
\[
Q(x)=
\frac{\left(\frac{2x}{s} -1 \right)^2x}{(s-2)^2/4+ \frac{2x}{s} -1}.
\]
Note $\frac{m}{n-m-s}>3.1$, $0\leq x \leq \frac{s^2}{4}$.

Set $b=\frac{s^2}{4}$.
For \(x<b\), set
\[
u=\frac{2x}{s}.
\]
Then \(0\le u\le s/2\). A direct simplification gives
\[
\frac{Q(b)-Q(x)}{b-x}
=
R(u),
\]
where
\[
R(u)
=
\frac{s^2+2su-6s+4u^2-8u+8}{s^2-4s+4u}.
\]

We claim that \(R(u)\) is increasing on \(0\le u\le s/2\). Indeed,
\[
R'(u)
=
\frac{
2\left(
s^3+4s^2u-10s^2-16su+28s+8u^2-16
\right)
}
{(s^2-4s+4u)^2}.
\]
Since
\begin{align*}
s^3+4s^2u-10s^2-16su+28s+8u^2-16
&=(s^3-10s^2+28s-16)+(4s^2u-16su+8u^2),
\end{align*}
and
\[
s^3-10s^2+28s-16=(s-4)(s^2-6s+4)>0,
\]
while
\[
4s^2u-16su+8u^2=4su(s-4)+8u^2\ge 0.
\]
Hence \(R'(u)>0\) on \(0\le u\le s/2\). Therefore
\[
R(u)\le R(s/2).
\]
A direct computation gives
\[
R(s/2)=3-\frac4s.
\]
Thus
\[
\frac{Q(b)-Q(x)}{b-x}
\le
3-\frac4s,
\]
so
\[
Q(b)-Q(x)
\le
\left(3-\frac4s\right)(b-x).
\]

Therefore
\begin{equation} \label{eq:gap}
g(b,s)-g(x,s)
\ge
m(b-x)-(n-m-s)\left(3-\frac4s\right)(b-x)
\ge (n-m-s) \left(0.1 +\frac4s\right)(b-x)\ge 0.
\end{equation}

If $s=n-m$,
then
\[
|E(H)|\leq m|F| \leq
g\!\left(
\left\lceil\frac{n-m}{2}\right\rceil
\left\lfloor\frac{n-m}{2}\right\rfloor,
n-m
\right)
=
f(n,m).
\]

If $s<n-m$, then
\begin{align*}
|E(H)|
&\le
g\!\left(\frac{s^2}{4},s\right)
\\
&=
\frac{s^2}{4}m
+
(n-m-s)
\Bigg(
\frac{s(n-m)}{4}
+\frac{(n-m-s)^2}{27} 
\Bigg)\\
&\leq \frac{(n-m)^2m}{4} -2m\\
&<f(n,m).
\end{align*}

Suppose that
\[
|E(H)|>f(n,m)-t.
\]
We have
\[\frac{s^2}{4}m
+
(n-m-s)
\Bigg(
\frac{s(n-m)}{4}
+\frac{(n-m-s)^2}{27} 
\Bigg) \geq \frac{(n-m)^2m}{4} -t. \]

Since $\frac{m}{n-m}\ge\frac{c}{1-c}>\frac{1}{2}$, we have
\begin{align*}
    t &\geq \frac{m(n-m-s)(n-m+s)}{4}-(n-m-s)\Bigg(
\frac{s(n-m)}{4}
+\frac{(n-m-s)^2}{27} 
\Bigg)\\
&\ge\Big(\frac{c}{1-c}-\frac{1}{2} \Big)\frac{(n-m)(n-m-s)(n-m+s)}{4}-\frac{(n-m)(n-m-s)(n-m+s)}{8}\\
&\quad -(n-m-s)\Bigg(
\frac{s(n-m)}{4}
+\frac{(n-m-s)^2}{27} 
\Bigg)\\
&=\Big(\frac{c}{1-c}-\frac{1}{2} \Big)\frac{(n-m)(n-m-s)(n-m+s)}{4}+(n-m-s)\Bigg(\frac{(n-m-s)(n-m)}{8}-\frac{(n-m-s)^2}{27} \Bigg)\\
& \geq \Big(\frac{c}{1-c}-\frac{1}{2} \Big)\frac{(n-m)(n-m-s)(n-m+s)}{4}.
\end{align*}

Thus, we get
\begin{equation} \label{eq:s}
n-m-s \leq \frac{8(1-c)t}{(3c-1)(n-m)(n+m+s)}= O\Big(\frac{t}{n^2}\Big).
\end{equation}

In particular, if $t<Cn^2$, then $n-m-s=O(1)$. Furthermore,
if $t=o(n^2)$, then $n-m-s=0$. 

Observe that $g(|F|,s)\geq g(s^2/4,s)-t$. By \eqref{eq:gap}, we have
\[t \geq m\Big(\frac{s^2}{4}-|F| \Big)- (n-m-s)\Big(3-\frac{4}{s} \Big) \Big(\frac{s^2}{4}-|F| \Big).\]
Thus, by \eqref{eq:s}, we have
\[\frac{s^2}{4}-|F|\leq \frac{t}{m - (n-m-s)(3-\frac{4}{s})} \leq O\Big(\frac{t}{n}\Big).\]

By Lemma~\ref{lem:furedi-sta}, there exists a partition
\[
S=V'_2\cup V'_3
\]
such that
\begin{equation} \label{eq:furedi}
|E(F)\Delta E(K(V'_2,V'_3))|\leq  O\Big(\frac{t}{n}\Big).    
\end{equation}

Let $|V'_2|=\frac{s}{2}+r$ and $|V'_3|=\frac{s}{2}-r$. Then 
\[\frac{s^2}{4}-r^2=|E(K(V'_2,V'_3))| \geq |F|- O\Big(\frac{t}{n}\Big) \geq \frac{s^2}{4}-O\Big(\frac{t}{n}\Big).\]
Thus, $r \leq O(\sqrt{t/n}) <O(n)$ and so $|V'_2|, |V'_3| =\Theta(n)$.

Let $F_x=L_H(x)$ for $x \in M$. In particular, $F_{v_0}=F$.
\begin{claim} \label{cla:edge-inside}
\[
\left|E\Big(\bigcup_{x \in M} (F_x[V'_2] \cup F_x[V'_3])\Big)\right|\leq  O\Big(\frac{t}{n}\Big).
\]  
\end{claim}

We prove the claim. Observe $E(F[V'_2] \cup F[V'_3]) \leq O(t/n)$ by \eqref{eq:furedi}. Without loss of generality, consider an edge $uv \in E(F_x[V'_2])$ for some $x \in M \setminus \{v_0\}$. If there is a vertex $w \in V'_3$ such that $vw, uw \in E(F)$, then $uwv_0, vwv_0, uvx$ form a copy of $F_5$, a contradiction. Thus, at least one of $u,v$ has at least $|V'_3|/2=\Theta(n)$ missing crossing edges between $V'_2$ and $V'_3$. By \eqref{eq:furedi}, at most $O(t/n^2)$ vertices in $S$ can have at least $\Theta(n)$ missing crossing edges between $V'_2$ and $V'_3$. Each edge inside $F_x[V'_2]$ or $F_x[V'_3]$ must be adjacent to such a vertex. Hence, the statement follows.

Extend this partition to $\overline{M}=V_2\cup V_3$ by assigning the remaining vertices of $\overline{M}\setminus S$ arbitrarily to $V_2$ and $V_3$. Let $V_1=M$. Consider the partition $V=V_1\cup V_2\cup V_3$. A triple $u,v,w$ is called {\em bad} if either
\begin{enumerate}
    \item $e=uvw \in E(K^3(V_1,V_2,V_3)) \setminus E(H)$, or 
    \item  $e=uvw \in E(H) \setminus E(K^3(V_1,V_2,V_3))$.
\end{enumerate}

Since $|\overline{M} \setminus S| \leq  O(t/n^2)$, the number of bad triples incident to $\overline{M} \setminus S$ is at most $O(t)$. Now we look at the bad triples that are only incident to $V_1 \cup V'_2 \cup V'_3$. By Claim~\ref{cla:edge-inside}, there are at most $O(t)$ bad triples $u,v,w$ such that $e=uvw \in E(H[V_1 \cup V'_2 \cup V'_3]) \setminus E(K^3(V_1,V_2,V_3))$. Let $X$ be the set of bad triples $u,v,w$ such that $e=uvw \in E(K^3(V_1,V_2,V_3)) \setminus E(H[V_1 \cup V'_2 \cup V'_3])$. We observe that \[f(n,m)-t < |E(H)| \leq f(n,m)-|X|+O(t).\]
Thus, $|X| \leq O(t)$.
It follows that
\[
|E(H)\Delta E(K^3(V_1,V_2,V_3))|
=O(t).
\]

This completes the proof of the lemma.
\end{proof}

\begin{lemma}\label{lem:3-core-sta}
For every constant $\frac{1}{3}<c<1$ and every constant $C>0$, there exist constants $C'>0$ and $n_0$ such that the following holds. Let $n\ge n_0$, let $m>cn$, and let $H$ be an $F_5$-free $3$-uniform hypergraph on $n$ vertices with $\Delta_2(H)\ge m$ such that either $d_H(u,v) \geq 3$ or $d_H(u,v)=0$ for any pair of vertices $u,v \in V(H)$. For any $t=o(n^3)$, if
\[
|E(H)|
>
m\left\lceil\frac{n-m}{2}\right\rceil
\left\lfloor\frac{n-m}{2}\right\rfloor
-t,
\]
then there exists a partition
\[
V(H)=V_1\cup V_2\cup V_3
\]
such that $|V_1|=\Delta_2(H)\geq m$ and
\[
|E(H)\,\Delta\,E(K^3(V_1,V_2,V_3))|
\le O(t).
\]
\end{lemma}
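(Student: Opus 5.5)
The plan is to reduce to Lemma~\ref{lem:k4free}. Let $r=\Delta_2(H)\ge m$ and fix a pair $u,v$ with $d_H(u,v)=r$. Set $M=N_H(u,v)$, so $|M|=r$.

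\textbf{Step 1: $M$ is strongly independent.} Suppose two vertices $a,b\in M$ lie in a common edge $abw$. If $w\notin\{u,v\}$, then $uva$, $uvb$, $abw$ is a copy of $F_5$, with $u,v$ playing the roles of the pair of degree two and $a,b$ the roles of $c,d$. If $w\in\{u,v\}$, say $w=u$, then the pair $ua$ lies in an edge, so by the $3$-core hypothesis $d_H(u,a)\ge 3$. Hence there is an edge $uaw'$ with $w'\notin\{v,b\}$, and we get a configuration of the previous type. Concretely, $uva$, $uvb$ together with an edge through $ab$ avoiding $u,v$ is what we need; otherwise we use the edges $ab w'$ guaranteed by $d(a,b)\ge3$. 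Indeed, $d_H(a,b)\ge 3$ gives some $w\notin\{u,v\}$ with $abw\in E(H)$, which reduces to the first case. So $M$ is strongly independent.

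\textbf{Step 2: $\partial H$ is $K_4$-free.} Suppose $\{w,x,y,z\}$ spans a $K_4$ in $\partial H$. This is the main obstacle: pairs can be covered by edges spread all over $H$. The plan is again to use the core condition, since each of the six pairs has codegree at least $3$, to build an $F_5$.

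\begin{itemize}
\item If some triple, say $xyz$, is an edge and $w$ is joined to $x$ and $y$ through edges $wxa$ and $wyb$, try to realize $xyz$, $xy c$ (with $c\ne z$, by codegree $\ge 3$), and an edge through $zc$. This can fail, so the argument must be more careful.
\item The cleaner route is to use the known fact that $F_5$-free together with the core condition forces every edge of the shadow to lie in many edges. Pick an edge $e$ containing $x,y$ and use the edges through $wz$. Among the at least three edges $wz\ast$, one third vertex avoids $x$ and $y$, call it $q$. Now $wzq$ must avoid forming $F_5$ with a pair of edges sharing $x,y$ whose third vertices are $w,z$.
\end{itemize}

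We therefore aim to show that $xyw$ and $xyz$ cannot both be edges while some edge $wzq$ with $q\notin\{x,y\}$ exists; that configuration is exactly $F_5$. Using codegrees $\ge 3$ on all six pairs, we case-split on which triples of $\{w,x,y,z\}$ are edges and, in each case, find two edges sharing a pair whose other vertices are covered by a third edge disjoint from that pair. I expect this case analysis to be the main difficulty. If it cannot be closed in full, the fallback is to show that only $O(n^2)$ edges meet $K_4$'s and delete them, absorbing the loss into $t$. This is permissible because $t$ is allowed up to $o(n^3)$ and the stability bound is $O(t)$ only after assuming $t\gtrsim n^2$; for smaller $t$ the deletion step must be avoided.

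\textbf{Step 3: apply the lemma.} Once $M$ is strongly independent of size $r\ge m>cn$ and $\partial H$ is $K_4$-free, Lemma~\ref{lem:k4free} with $m$ replaced by $r$ gives $|E(H)|\le f(n,r)$. By the monotonicity of $f(n,\cdot)$ shown above, $f(n,r)\le f(n,m)$, so
\[
f(n,r)\ge |E(H)|> f(n,m)-t\ge f(n,r)-t .
\]
The stability part of Lemma~\ref{lem:k4free} then yields a partition with $V_1=M$, $|V_1|=\Delta_2(H)$, and symmetric difference $O(t)$, as claimed.
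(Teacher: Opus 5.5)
\textbf{Step 2 has a genuine gap.} Steps 1 and 3 are fine; Step 1 is the paper's argument, since $d_H(a,b)\ge 3$ yields a third vertex outside $\{u,v\}$. The trouble is that Step 2 tries to prove something false: an $F_5$-free $3$-graph whose codegrees are all $0$ or at least $3$ can have a $K_4$ in its shadow.

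\textbf{Counterexample.} Blow up each point of the Fano plane into an independent class of size $3$, and take all transversal triples of every line.
\begin{itemize}
\item Two vertices in distinct classes lie on a unique line. So their codegree is exactly $3$, and their neighborhood is a whole class.
\item Two vertices in the same class have codegree $0$.
\item Hence any two edges through a common pair have their third vertices in one class. That pair lies in no edge, so there is no $F_5$.
\item Yet one vertex from each of four classes spans a $K_4$ in the shadow.
\end{itemize}
Take the disjoint union of this $21$-vertex hypergraph with $K^3_{m,\lceil (n-m-21)/2\rceil,\lfloor (n-m-21)/2\rfloor}$. This satisfies every hypothesis of the lemma with $t=\Theta(n^2)$. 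So no case analysis on which triples of $\{w,x,y,z\}$ are edges can close, and Lemma~\ref{lem:k4free} cannot be applied to $H$ itself.

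\textbf{Your fallback is unsupported.} The claim that only $O(n^2)$ edges meet $K_4$'s has no justification. There may be many vertex-disjoint copies. Even for a single copy, the trivial bound $4\binom n2$ on incident edges is too weak to beat the loss $f(n,r)-f(n-4,r-1)\approx (n-r)^2/4+3r(n-r)/2$.

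\textbf{What the paper does instead.} It uses a global counting argument with two ingredients.
\begin{itemize}
\item \emph{Local claim.} For a shadow $K_4$ on $\{w,x,y,z\}$, every pair $uv$ other than the six internal pairs has at most one neighbor in $\{w,x,y,z\}$. Indeed, if $uvx,uvy\in E(H)$, the core condition gives an edge $xyp$ with $p\notin\{u,v\}$, which forms an $F_5$. Pairs inside the strongly independent set $M$ contribute nothing. So a copy of $K_4$ meets at most $\binom n2-\binom r2+O(n)$ edges.
\item \emph{Counting.} Delete shadow $K_4$'s one at a time; each meets $M$ in at most one vertex. Compared with the drop in $f$, each deletion leaves a deficit of order $(n-r)(3r-n)=\Theta(n^2)$, and this uses $r>n/3$. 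Hence the number $q$ of deleted copies is $O(t/n^2)$.
\end{itemize}
Lemma~\ref{lem:k4free} then applies to the remaining $K_4$-free hypergraph on $n-4q$ vertices, with a strongly independent set of size $r-q_1$, where $q_1$ is the number of deleted copies meeting $M$. Restoring the $4q$ vertices changes only $O(qn^2)=O(t)$ triples. In particular, for $t=o(n^2)$ the count forces $q=0$. That is how $K_4$-freeness is obtained in the small-$t$ regime you wanted to handle without deletion: globally from the edge count, not from a local configuration argument.
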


\begin{proof}
Choose a pair of vertices $a,b$ with $\Delta_2(G)=d_H(a,b)=r\ge m$, and let $M= N_H(a,b)$. Thus, $|M|=r \geq m$.
\begin{claim} \label{cla:indep}
The set $M$ is strongly independent in $H$.
\end{claim}

\begin{proof}
Suppose otherwise. Then there exist vertices $c,d\in M$ such that $cd\in E(\partial(H))$. Thus, we have $d_{H}(c,d)\ge 3$. Therefore, there exists an edge $e=cdf \in E(H)$ with $f\notin\{a,b\}$. Since $c,d \in M$, we have $abc,abd \in E(H)$. Then $abc,abd,cdf$ forms a copy of $F_5$, a contradiction.
\end{proof}

If $\partial H$ is $K_4$-free, then by Lemma~\ref{lem:k4free}, we are done. Thus we may assume that $\partial(H)$ contains a copy of $K_4$. Let its vertex set be $\{w,x,y,z\}$.

\begin{claim}\label{clm:K4}
Let $wxyz$ be a copy of $K_4$ in $\partial(H)$. Then for every pair
\[
uv\in \binom{V(H)}{2}
\setminus
\{wx,wy,wz,xy,xz,yz\},
\]
we have
\[
|N_{H}(uv)\cap\{w,x,y,z\}|
\le 1.
\]
\end{claim}

\begin{proof}
Suppose not. Without loss of generality, assume $uvx, uvy\in E(H)$. Since $xy\in E(\partial(H))$ and $H$ is a $3$-core, we have $d_H(x,y)\ge 3$. Thus, there exists a vertex $p\notin\{u,v\}$ such that $xyp\in E(H)$. Then $uvx, uvy, xyp$ forms a copy of $F_5$, a contradiction.
\end{proof}

We now iteratively eliminate all copies of $K_4$ from the shadow graph. Whenever a copy of $K_4$ with vertex set $\{w,x,y,z\}$ appears, we delete all four vertices. Repeating this procedure yields a sequence of induced subhypergraphs
\[
H=H_0\supset H_1\supset\cdots\supset H_q,
\]
such that $\partial(H_q)$ is $K_4$-free.

Since $M$ is strongly independent, every copy of $K_4$ in $\partial(H_i)$ contains at most one vertex of $M$. Let
\[
q=q_1+q_2,
\]
where
\begin{itemize}
\item $q_1$ is the number of deleted copies of $K_4$ containing a vertex of $M$;
\item $q_2$ is the number of deleted copies of $K_4$ disjoint from $M$.
\end{itemize}

Without loss of generality, assume that all $q_1$ copies are removed before the $q_2$ copies.

Applying Lemma~\ref{lem:k4free} to $H_q$, we obtain
\[
|E(H_q)|
\le
f(n-4q,r-q_1).
\]

For a deleted copy of $K_4$ containing one vertex of $M$, Claim~\ref{clm:K4} implies that the number of incident edges is at most
\[
\binom{n-4i}{2}
-
\binom{r-i}{2}
+
6(n-4i-2).
\]

Similarly, for a deleted copy of $K_4$ disjoint from $M$, the number of incident edges is at most
\[
\binom{n-4q_1-4i}{2}
-
\binom{r-q_1}{2}
+
6(n-q_1-4i-2).
\]

Therefore,
\begin{align}
|E(H)|
&\le
f(n-4q,r-q_1)
\nonumber\\
&\quad+
\sum_{i=1}^{q_1}
\left(
\binom{n-4i}{2}
-
\binom{r-i}{2}
+
6(n-4i-2)
\right)
\nonumber\\
&\quad+
\sum_{i=1}^{q_2}
\left(
\binom{n-4q_1-4i}{2}
-
\binom{r-q_1}{2}
+
6(n-q_1-4i-2)
\right).
\label{eq:qbound}
\end{align}

Since 
\begin{equation} \label{eq:Elb}
|E(H)| \geq f(n,m)-t \geq f(n,r)-t,    
\end{equation}

combining \eqref{eq:qbound} with \eqref{eq:Elb} and simplifying, we obtain

\[q \leq \frac{8t}{(1-c)(3c-1)n^2}.\]

Since $c>\frac13$, we have
\[q=O\Big(\frac{t}{n^2}\Big).\]

Since $H_q$ is $K_4$-free and \[|E(H_q)| \geq |E(H)|-qn^2>f(n-4q,r-q_1)-O(t),\]
applying Lemma~\ref{lem:k4free} to $H_q$, we get a partition of $V(H_q)=V'_1 \cup V'_2 \cup V'_3$ with $|V'_1|=r-q_1$ such that \[|E(H_q) \Delta E(K^3(V'_1,V'_2,V'_3))|\le O(t).\]
Now adding back $q_1$ vertices to $V'_1$ and adding back the remaining vertices arbitrarily. Since $4q=O(t/n^2)$ vertices were removed, we have the symmetric difference by at most $4qn^2 \leq O(t)$. Thus, \[|E(H) \Delta E(K^3(V_1,V_2,V_3))|\le O(t).\]
\end{proof}

Let $|V_1|=r$, $|V_2|=a_2$ and $|V_3|=a_3$ where $a_2+a_3=n-r$. For a given $r$, it is easy to see that the number of edges is maximized when the two parts $V_2, V_3$ are as equally as possible, i.e.,
\[
|E(H)|
\le
r\left\lceil\frac{n-r}{2}\right\rceil
 \left\lfloor\frac{n-r}{2}\right\rfloor
=f(n,r).
\]
Now we will show that the function $f(n,r)$ is nonincreasing for $r\ge \lceil n/3\rceil$. Indeed, if $n-r=2a$, then
\[
f(n,r)-f(n,r+1)=ra^2-(r+1)a(a-1)=a(r+1-a)\ge0,
\]
and if $n-r=2a+1$, then
\[
f(n,r)-f(n,r+1)=ra(a+1)-(r+1)a^2=a(r-a)\ge0.
\]
Hence $f(n,r+1)\le f(n,r)$.

\begin{proof}[Proof of Theorem~\ref{thm:main}]
Suppose $|E(H)|\geq f(n,m)$ otherwise there is nothing to prove. Choose a pair of vertices $a,b$ with $\Delta_2(G)=d_H(a,b)=r\ge m$, and let $M= N_H(a,b)$. Consider the $3$-core $H'$ of $H$, obtained by repeatedly deleting every edge containing a pair $uv$ with codegree $d_H(u,v)\in\{1,2\}$. Since at most two edges are removed for each pair of vertices, we have
\begin{equation}
|E(H')|
\geq
|E(H)|-2\binom{n}{2}
\geq f(n,m)-O(n^2).
\end{equation}

Applying Lemma~\ref{lem:3-core-sta} to $H'$, we have a partition with $|V_1|=r$ such that
\begin{equation} \label{eq:Hprime-n2}
|E(H') \Delta E(K^3(V_1,V_2,V_3))|\le O(n^2).    
\end{equation}

First, we show that 
\begin{equation} \label{eq:bt-n2}
 |E(H')| \geq |E(H)|-O(n) \geq f(n,m)-O(n) 
\end{equation}

 Let $i,j,k \in [3]$. Define the following sets
\[X_1=\{(u,v): u \in V_i, v\in V_j \},\]
\[X_2=\{(u,v):u,v \in V_i \}.\]

We say a triple $u,v,w$ is {\em missing} if $e=uvw \in E(K^3(V_1,V_2,V_3)) \setminus E(H')$. By \eqref{eq:Hprime-n2}, $H'$ has at most $O(n^2)$ missing triples. Let $Y$ be the set of pairs $(u,v)$ with $d_H(u,v) \in \{1,2\}$. If $(u,v) \in Y \cap X_1$, then $(u,v)$ is contained in at least $O(n)$ missing triples and thus $|Y \cap X_1| \leq O(n)$.

Let $(u,v) \in Y \cap X_2$ and $w \in N_H(u,v)$. Without loss of generality, assume $w \in V_j$. We observe that for any pair of vertices $x,y$ where $x \in V_j \setminus \{w\}$ and $y \in V_k$, if $xyu \in E(H')$, then $xyv \notin E(H')$. If not, then $xyu, xyv, uvw$ forms a copy of $F_5$. A contradiction. Thus, at least one of $u,v$ is contained in at least $(|V_j|-1)|V_k|/2 \geq O(n^2)$ missing triples. We say a vertex is {\em bad} if it is contained in at least $O(n^2)$ missing triples. Since $H'$ has at most $O(n^2)$ missing triples, the number of bad vertices is at most $O(1)$. Since any pair of vertices in $Y \cap X_2$ must contain a bad vertex, we have $|Y \cap X_2| \leq O(n)$. Hence, \eqref{eq:bt-n2} follows. Applying Lemma~\ref{lem:3-core-sta} to $H'$, we have a partition with $|V_1|=r$ such that
\begin{equation} \label{eq:Hprime-n}
|E(H') \Delta E(K^3(V_1,V_2,V_3))|\le O(n).    
\end{equation}

We now show that 
\begin{equation} \label{eq:bt-n}
 |E(H')| \geq |E(H)|-O(1) \geq f(n,m)-O(1).   
\end{equation}

By \eqref{eq:Hprime-n}, $H'$ has at most $O(n)$ missing triples. If $(u,v) \in Y \cap X_1$, then $(u,v)$ is contained in at least $O(n)$ missing triples and thus $|Y \cap X_1| \leq O(1)$. Since a bad vertex is contained in at least $O(n^2)$ missing triples and any pair of vertices in $Y \cap X_2$ contains a bad vertex, we have $|Y \cap X_2|=0$. Hence, \eqref{eq:bt-n} follows. Applying Lemma~\ref{lem:3-core-sta} to $H'$, we have a partition with $|V_1|=r$ such that
\[|E(H') \Delta E(K^3(V_1,V_2,V_3))|\le O(1).\]

Then $H'$ has at most $O(1)$ missing triples. If $(u,v)\in Y \cap X_1$, then $(u,v)$ is contained in at least $O(n)$ missing triples and thus $|Y \cap X_1|=0$. Also, it is easy to see $|Y \cap X_2|=0$. Thus, we have $H=H' \subseteq K^3(V_1,V_2,V_3)$. Hence $|E(H)| \leq |E(K^3(V_1,V_2,V_3))| \le f(n,r)\le f(n,m)$, which proves the upper bound. If equality holds, then $r=m$ and the two remaining parts must be as equal as possible. Thus the unique extremal hypergraph is
\[
K^3_{m,\lceil (n-m)/2\rceil,\lfloor (n-m)/2\rfloor}.
\]
\end{proof}

\begin{proof}[Proof of Theorem~\ref{thm:sta}]

Choose a pair of vertices $a,b$ with $\Delta_2(G)=d_H(a,b)=r\ge m$, and let $M= N_H(a,b)$. Consider the $3$-core $H'$ of $H$, obtained by repeatedly deleting every edge containing a pair $uv$ with codegree $d_H(u,v)\in\{1,2\}$. Since at most two edges are removed for each pair of vertices, we have
\begin{equation}
|E(H')|
\ge
|E(H)|-2\binom{n}{2}
\geq f(n,m)-t-O(n^2).
\end{equation}

If $t$ is at least a sufficiently large constant multiple of $n^2$, then the desired bound follows by first applying Lemma~\ref{lem:3-core-sta} to $H'$ and then adding back the deleted edges which are at most $O(n^2)=O(t)$. Thus, we may assume $t=o(n^2)$. Applying Lemma~\ref{lem:3-core-sta} to $H'$, we have a partition with $|V_1|=r$ such that
\begin{equation} \label{eq:Hprime-n2}
|E(H') \Delta E(K^3(V_1,V_2,V_3))|\le O(n^2).    
\end{equation}

Let $Y$ be the set of pairs $(u,v)$ with $d_H(u,v) \in \{1,2\}$. Using the same argument as in the proof of Theorem \ref{thm:main}, we obtain $|Y|=|Y \cap X_1|+|Y \cap X_2| \leq O(n)$ and thus
\begin{equation} \label{eq:bt-n2}
 |E(H')| \geq |E(H)|-O(n) \geq f(n,m)-t-O(n).
\end{equation}

If $t$ is at least a sufficiently large constant multiple of $n$, then the desired bound follows by first applying Lemma~\ref{lem:3-core-sta} to $H'$ and then adding back the deleted edges which are at most $O(n)=O(t)$. Thus, we may assume $t=o(n)$. Applying Lemma~\ref{lem:3-core-sta} to $H'$, we have a partition with $|V_1|=r$ such that
\begin{equation} \label{eq:Hprime-n}
|E(H') \Delta E(K^3(V_1,V_2,V_3))|\le O(n).    
\end{equation}

Using the same argument as in the proof of Theorem \ref{thm:main}, we obtain $|Y|=|Y \cap X_1|+|Y \cap X_2| \leq O(1)+0=O(1)$ and thus
\begin{equation} \label{eq:bt-n}
 |E(H')| \geq |E(H)|-O(1) \geq f(n,m)-t-O(1).   
\end{equation}

If $t$ is a constant at least one, then the desired bound follows by first applying Lemma~\ref{lem:3-core-sta} to $H'$ and then adding back the deleted edges which are at most $O(1)=O(t)$. Thus, we must have $t<1$. Applying Lemma~\ref{lem:3-core-sta} to $H'$, we have a partition with $|V_1|=r$ such that
\begin{equation} \label{eq:Hprime-1}
|E(H') \Delta E(K^3(V_1,V_2,V_3))|\le O(1).    
\end{equation}

Using the same argument as in the proof of Theorem \ref{thm:main}, we obtain $|Y|=|Y \cap X_1|+|Y \cap X_2|=0$ and thus
\begin{equation} \label{eq:bt-n}
 |E(H')| \geq |E(H)|-O(1) \geq f(n,m)-t.   
\end{equation}
Since both $|E(H)|$ and $f(n,m)$ are integers,
Theorem~\ref{thm:main} implies that $H$ is extremal. Thus the desired conclusion holds with zero error.

Moreover, if $t=o(n^2)$, then we observe that $|Y \cap X_2|=0$. This implies $H\subseteq K^3(V_1,V_2,V_3)$ when $t=o(n^2)$. This completes the proof.
\end{proof}

\section{Proofs of Theorems~\ref{thm:main-blow-up} and~\ref{thm:sta-blow-up}}
Recall that $F_5^t$ is the hypergraph on the vertex set $\{1,2,\ldots,5+t\}$ and edge set $\{123,124\} \cup \{34k: 5 \leq k \leq 5+t\}$. Recall that $F_2^t$ is the hypergraph on the vertex set $\{1,2,\ldots,3+t\}$ and edge set $ \{12k: 3 \leq k \leq 3+t\}$. Given a partition $V(H)=V_1 \cup V_2 \cup V_3$, write $N_{V_i}(u,v)=N_H(u,v) \cap V_i$ and $d_{V_i}(u,v)=|N_{V_i}(u,v)|$. Let $H(V_1,V_2,V_3)=H \cap K^3(V_1,V_2,V_3)$.

 Balogh, Clemen, and Luo \cite{BaloghClemenLuo} proved the following lemma.
\begin{lemma}[\cite{BaloghClemenLuo}] \label{lem:F2t-function}
For every integer $t\ge 0$ and sufficiently large $n$, we have
\[
\frac{t}{3}\binom{n}{2}
\ge
\ex(n,F_2^t)
\ge
\begin{cases}
\displaystyle
\frac{t}{3}\binom{n}{2}
-\frac{n}{3}
-\frac{t^2}{6}
-3t,
& \text{if $t$ is odd and $n$ is even},\\[1.9ex]
\displaystyle
\frac{t}{3}\binom{n}{2}
-\frac{8t}{3},
& \text{if $t$ is odd and $n$ is odd},\\[1.9ex]
\displaystyle
\frac{t}{3}\binom{n}{2}
-\frac{2t}{3},
& \text{if $t$ is even}.
\end{cases}
\] 
\end{lemma}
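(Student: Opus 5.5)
The upper bound $\ex(n,F_2^t)\le \frac{t}{3}\binom{n}{2}$ will come from double counting, and the lower bounds from explicit constructions.

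\emph{Upper bound.} A $3$-graph $J$ is $F_2^t$-free exactly when every pair $uv$ has codegree $d_J(u,v)\le t$. Indeed, $F_2^t$ is just a pair $\{1,2\}$ lying in $t+1$ edges. Each edge contains exactly three pairs, so
\[
3e(J)=\sum_{uv\in\binom{V}{2}} d_J(u,v)\le t\binom{n}{2}.
\]
This gives $e(J)\le \frac{t}{3}\binom{n}{2}$ immediately.

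\emph{Lower bounds.} We need $3$-graphs in which almost every pair has codegree exactly $t$. Such a $3$-graph is an ``almost $t$-fold triple system''. The plan is to take $t$ triple systems, or near-triple systems, that are edge-disjoint as sets of triples, and superimpose them.

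For $t$ even, one option is $t/2$ edge-disjoint copies of a $2$-fold triple system. A cleaner option is to take a $\lambda$-fold design with $\lambda=t$ whenever the divisibility conditions $\lambda(n-1)\equiv 0 \pmod 2$ and $\lambda n(n-1)\equiv 0\pmod 6$ hold. For even $t$ the first condition is automatic, and the second fails only by a bounded residue. We therefore invoke existence results for simple $\lambda$-fold triple systems with $\lambda\le n-2$ (designs without repeated blocks, Dehon's theorem). When divisibility fails, we use a maximum packing instead and lose only $O(t)$ triples, which gives the $-\frac{2t}{3}$ term.

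For $t$ odd and $n$ even, each vertex $v$ has total codegree $\sum_u d(u,v)$ equal to twice the degree of $v$, which is even. Since $t(n-1)$ is odd, each vertex must have some pair with codegree below $t$. This forces a loss of about $n/2$ pair-incidences, that is, about $n/3$ edges. We handle it by packing a perfect matching of deficient pairs (leave graph a $1$-factor), which yields $\frac{t}{3}\binom{n}{2}-\frac{n}{3}-O(t^2)$. The case $t$ odd, $n$ odd uses a near-design with a bounded leave.

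\emph{Main obstacle.} The hard part is the existence of simple (no repeated triples) $t$-fold packings with a prescribed small leave, uniformly in $t$ for large $n$. I would first try to cite the known results on maximum $\lambda$-fold packings with simple blocks. Failing that, I would take $t$ disjoint Steiner or Kirkman-type systems obtained by permuting the vertex set randomly and resolving the conflicts greedily, with only $O(t^2)$ loss.
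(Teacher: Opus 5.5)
The paper gives no proof of this lemma; it imports it from Balogh, Clemen, and Luo. So the proposal has to stand on its own.

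\textbf{Upper bound.} This part is correct and complete. $F_2^t$-freeness is exactly the condition $d_J(u,v)\le t$ for every pair, and $3e(J)=\sum_{uv}d_J(u,v)\le t\binom n2$. Your parity observation for $t$ odd and $n$ even is also right. One slip: a total pair-deficit of $n/2$ costs $n/6$ edges, not $n/3$. This is harmless here, since the stated bound is weaker.

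\textbf{Lower bound: the gap.} The lower bound is not proved. Dehon's theorem covers only the cases where $t(n-1)$ is even and $3\mid tn(n-1)$, and there you get exactly $\frac t3\binom n2$. The remaining cases are all of ($t$ odd, $n$ even) and ($3\nmid t$, $n\equiv 2\pmod 3$). Each rests on the existence of \emph{simple} $t$-fold triple packings whose leave is bounded, or is a $1$-factor plus a bounded part. You neither prove this nor reduce it to a specific theorem, and you flag it yourself as the main obstacle.

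It cannot be bypassed cheaply:
\begin{itemize}
\item Deleting a vertex from a simple $t$-fold system on $n+1$ points loses $tn/6$ edges relative to $\frac t3\binom n2$.
\item Superimposing simple designs of smaller index can create repeated triples.
\end{itemize}
Consequently the explicit error terms $\frac{2t}{3}$, $\frac{8t}{3}$ and $\frac n3+\frac{t^2}{6}+3t$ are asserted, not derived.

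\textbf{The fallback fails as stated.} Relabel a (near-)Steiner system by independent random permutations. A fixed triple of one copy lands in another copy with probability about $\frac{n(n-1)/6}{\binom n3}=\frac1{n-2}$. So any two copies share about $n/6$ triples, and $t$ copies produce about $\binom t2\frac n6$ collisions. Deleting them costs $\Theta(t^2n)$ edges, not $O(t^2)$. For even $t$ this already contradicts the claimed constant loss, and for $t\ge 3$ it even exceeds $n/3$. Repairing the collisions greedily is the same simple-packing problem again.

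\textbf{Why the constant matters.} The balancing and monotonicity arguments for $\Phi_t$ need the loss to be at most $\frac13$ of the part size plus a constant. An uncontrolled $O(n)$ loss would not suffice.

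\textbf{What would close the gap.} You need a genuine design-theoretic input. For $n\equiv 1,3\pmod 6$, large sets of Steiner triple systems (Lu Jiaxi--Teirlinck) give $t$ pairwise disjoint systems and hence exactly $\frac t3\binom n2$ edges. The other residues need an analogous result on disjoint maximum packings or on simple maximum $\lambda$-fold packings with small leave. Alternatively, cite the lemma as the paper does.
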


Let $|V_1|=m$, $|V_2|=a_2$, $|V_3|=a_3$ where $a_2+a_3=n-m$. For fixed $m>n/3$, define the function
\[g(n, a_2, a_3)=ma_2a_3+\ex \left(m,F^t_2 \right)+\ex \left(a_2,F^t_2 \right)+\ex \left(a_3,F^t_2 \right).\]
We will show that $g$ achieves its maximum when the two parts $V_2, V_3$ are as equally as possible, i.e.,
\[g(n,a_2,a_3) \leq g(n,\lceil (n-m)/2\rceil,\lfloor (n-m)/2\rfloor).\]
Without loss of generality, assume $a_2 \geq a_3+2$. We now show that shifting one vertex from the $V_2$ to $V_3$ increases $g$. Let $|V'_2|=a_2-1$ and $|V'_3|=a_3+1$. Then 

\begin{align*}
g(n,a_2-1,a_3+1)-g(n,a_2,a_3)
={}&m(a_2-a_3-1)
+\ex\left(a_2-1,F^t_2\right)-\ex\left(a_2,F^t_2\right)\\
&+\ex\left(a_3+1,F^t_2\right)-\ex\left(a_3,F^t_2\right).
\end{align*}

By Lemma~\ref{lem:F2t-function}, we have 
\begin{align*}
\ex\left(a_2-1,F^t_2\right)-\ex\left(a_2,F^t_2\right) &\geq -\frac{t}{3}\binom{a_2}{2}+\frac{t}{3}\binom{a_2-1}{2}-\frac{a_2-1}{3}-\frac{t^2}{6}-3t\\
&=-\frac{t(a_2-1)}{3}-\frac{a_2-1}{3}-\frac{t^2}{6}-3t.
\end{align*}

\begin{align*}
 \ex \left(a_3+1,F^t_2 \right)-\ex \left(a_3,F^t_2 \right) &\geq \frac{t}{3}\binom{a_3+1}{2}-\frac{a_3+1}{3}-\frac{t^2}{6}-3t- \frac{t}{3}\binom{a_3}{2}  \\
 &=\frac{ta_3}{3}-\frac{a_3+1}{3}-\frac{t^2}{6}-3t.
\end{align*}

Therefore,
\begin{align*}
g(n,a_2-1,a_3+1)-g(n,a_2,a_3)
\ge{}&
m(a_2-a_3-1)
+\frac{t(a_3-a_2+1)}3\\
&
-\frac{n-m}{3}
-\frac{t^2}{3}
-6t\\
={}&
\left(m-\frac t3\right)(a_2-a_3-1)
-\frac{n-m}{3}
-\frac{t^2}{3}
-6t.
\end{align*}

Since $a_2-a_3\ge2$, we have
$a_2-a_3-1\ge1$. Hence
\[
g(n,a_2-1,a_3+1)-g(n,a_2,a_3)
\ge
m-\frac t3
-\frac{n-m}{3}
-\frac{t^2}{3}
-6t.
\]
Using $m>cn$, we obtain
\[
m-\frac{n-m}{3}
=
\frac{4m-n}{3}
\ge
\frac{4c-1}{3}n.
\]
Since $c>\frac13$, the coefficient
$\frac{4c-1}{3}$ is positive. Thus, for sufficiently large $n$,
\[
\frac{4c-1}{3}n
>
\frac t3+\frac{t^2}{3}+6t,
\]
which implies
\[
g(n,a_2-1,a_3+1)>g(n,a_2,a_3).
\]
Repeating this procedure until $V_2$ and $V_3$ are balanced gives \[g(n,a_2,a_3) \leq g(n,\lceil (n-m)/2\rceil,\lfloor (n-m)/2\rfloor).\]
This proves the balancing claim.

Define
\[
\Phi_t(n,m)
=
f(n,m)+\ex \left(m,F^t_2 \right)+\ex \left(\left\lceil\frac{n-m}{2}\right\rceil,F^t_2 \right)+\ex \left(\left\lfloor\frac{n-m}{2}\right\rfloor,F^t_2 \right),
\]
where $f(n,m)=m\left\lceil\frac{n-m}{2}\right\rceil
\left\lfloor\frac{n-m}{2}\right\rfloor$. We next show that the function $\Phi_t(n,m)$ is nonincreasing in $m$.

It suffices to show that $\Phi_t(n,m+1)\le \Phi_t(n,m)$. Set $s=n-m$. We consider two parity cases.

\medskip
\noindent\textbf{Case 1: $s=2r$ is even.}
Then
\[
\Phi_t(n,m)=mr^2+\ex(m,F_2^t)+2\ex(r,F_2^t),
\]
while
\[
\Phi_t(n,m+1)=(m+1)r(r-1)+\ex(m+1,F_2^t)+\ex(r-1,F_2^t)+\ex(r,F_2^t).
\]
Hence
\begin{align*}
\Phi_t(n,m+1)-\Phi_t(n,m)
&=
-r(m-r+1)
+\bigl(\ex(m+1,F_2^t)-\ex(m,F_2^t)\bigr) \\
&\qquad
+\bigl(\ex(r-1,F_2^t)-\ex(r,F_2^t)\bigr).
\end{align*}

By Lemma~\ref{lem:F2t-function}, we have
\[
\ex(m+1,F_2^t)-\ex(m,F_2^t)\le \frac{(t+1)m}{3}+\frac{t^2}{6}+3t.
\]
and
\[
\ex(r-1,F_2^t)-\ex(r,F_2^t)\le -\frac{t(r-1)}{3}
+\frac{r}{3}
+\frac{t^2}{6}
+3t.
\]
Therefore, for sufficiently large $n$
\begin{align*}
\Phi_t(n,m+1)-\Phi_t(n,m)
&\le
-(m-r+1)\left(r-\frac t3\right)
+\frac{m+r}{3}
+\frac{t^2}{3}
+6t \leq 0.
\end{align*}

\medskip
\noindent\textbf{Case 2: $s=2r+1$ is odd.}
Then
\[
\Phi_t(n,m)=mr(r+1)+\ex(m,F_2^t)+\ex(r,F_2^t)+\ex(r+1,F_2^t),
\]
and
\[
\Phi_t(n,m+1)=(m+1)r^2+\ex(m+1,F_2^t)+2\ex(r,F_2^t).
\]
So
\begin{align*}
\Phi_t(n,m+1)-\Phi_t(n,m)
&=
-r(m-r)
+\bigl(\ex(m+1,F_2^t)-\ex(m,F_2^t)\bigr) \\
&\qquad
+\bigl(\ex(r,F_2^t)-\ex(r+1,F_2^t)\bigr).
\end{align*}
Using Lemma~\ref{lem:F2t-function}, for sufficiently large $n$, we obtain
\begin{align*}
\Phi_t(n,m+1)-\Phi_t(n,m)
&\le
-(m-r)\left(r-\frac t3\right)
+\frac{m+r}{3}
+\frac{t^2}{3}
+6t \leq 0.
\end{align*}
This proves that $\Phi_t(n,m)$ is nonincreasing in $m$.

\begin{proof}[Proof of Theorem~\ref{thm:main-blow-up}]

The construction in the statement gives the lower bound: take
$K^3_{m,\lceil (n-m)/2\rceil,\lfloor (n-m)/2\rfloor}$ and place an
extremal $F_2^t$-free $3$-graph inside each part.

For the upper bound, let $H$ be an $n$-vertex $F_5^t$-free $3$-graph with
$\Delta_2(H)=r\ge m$. Suppose $|E(H)|\geq \Phi_t(n,m)$ otherwise there is nothing to prove. Choose a pair of vertices $a,b$ satisfying $d_H(a,b)=r \geq m$, and let $M:=N_H(a,b)$. Then $|M|=r \geq m$. Let $H'$ be the $(t+3)$-core of $H$, obtained by repeatedly deleting every edge containing a pair of codegree at most $t+2$. Since at most $t+2$ edges are removed for each pair of vertices,
\[
|E(H')|
\ge
|E(H)|-(t+2)\binom{n}{2}>\Phi_t(n,m)-(t+2)\binom{n}{2}.
\]

\begin{claim} \label{cla:indep}
The set $M$ is strongly independent in $H'$.
\end{claim}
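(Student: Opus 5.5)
The plan is to argue by contradiction, following the proof of the corresponding claim in Lemma~\ref{lem:3-core-sta}. The only change is that we use the larger codegree threshold $t+3$ of the core to produce the $t+1$ ``pendant'' edges of $F_5^t$ instead of the single edge $cde$ of $F_5$.

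Suppose $M$ is not strongly independent in $H'$. Then there are distinct $c,d\in M$ lying in a common edge of $H'$, so $d_{H'}(c,d)\ge 1$. Since $H'$ is the $(t+3)$-core, every pair has codegree in $H'$ either $0$ or at least $t+3$. Hence $d_{H'}(c,d)\ge t+3$, so there are at least $t+3$ vertices $f$ with $cdf\in E(H')\subseteq E(H)$. Discarding $a$ and $b$ if they occur among them leaves at least $t+1$ distinct vertices $f_1,\dots,f_{t+1}\notin\{a,b,c,d\}$ with $cdf_k\in E(H)$.

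Since $c,d\in M=N_H(a,b)$, we have $abc,abd\in E(H)$. We identify the vertices of $F_5^t$ by $1\mapsto a$, $2\mapsto b$, $3\mapsto c$, $4\mapsto d$, and $4+k\mapsto f_k$ for $1\le k\le t+1$. Under this map, the edges $123,124$ go to $abc,abd$, and the edges $34k$ for $5\le k\le 5+t$ go to $cdf_{k-4}$; there are exactly $t+1$ of the latter. All $t+5$ vertices are distinct, so $H$ contains a copy of $F_5^t$, contradicting that $H$ is $F_5^t$-free.

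The one point needing care is that the edges $abc,abd$ need not survive in $H'$. This causes no difficulty, because the copy of $F_5^t$ is found in $H$, and $M$ is defined via codegrees in $H$. The only property of $H'$ used is that its nonzero codegrees are at least $t+3$, which is enough to supply $t+1$ extra vertices even after discarding $a$ and $b$.
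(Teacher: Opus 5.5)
Your proof is correct and follows the paper's argument. The $(t+3)$-core forces $d_{H'}(c,d)\ge t+3$, so discarding $a,b$ leaves $t+1$ vertices $f_i$ with $cdf_i\in E(H)$, and together with $abc,abd$ these form a copy of $F_5^t$ in $H$; you also correctly take the $f_i$ from $N_H(c,d)$, where the paper's text has a typo writing $N_H(a,b)$.
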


\begin{proof}
Suppose otherwise. Then there exist vertices $c,d\in M$ such that $cd\in E(\partial(H'))$. Thus, we have $d_{H}(c,d)\ge t+3$. Therefore, there exists $(t+1)$ vertices $f_1, \ldots f_{t+1} \in N_H(a,b)$ such that $f_1, \ldots, f_{t+1} \notin\{a,b\}$.  Then $\{abc,abd\} \cup \{cdf_i: 1 \leq i \leq t+1\}$ forms a copy of $F^t_5$, a contradiction.
\end{proof}

\begin{claim}
$H'$ is $F_5$-free.
\end{claim}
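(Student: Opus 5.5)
Suppose, for contradiction, that $H'$ contains a copy of $F_5$ with edges $xyz$, $xyw$, $zwp$; here $x,y,z,w,p$ are distinct vertices and all three triples lie in $E(H')$. The idea is to extend this copy to a copy of $F_5^t$ in $H'\subseteq H$, which contradicts the hypothesis that $H$ is $F_5^t$-free. We use only the defining property of the $(t+3)$-core. Every pair of vertices lying in an edge of $H'$ has codegree at least $t+3$ in $H'$ itself; in other words, $H'$ is a maximal subhypergraph in which every pair has codegree $0$ or at least $t+3$.

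Recall that in $F_5^t$ the two vertices $1,2$ lie in the edges $123$ and $124$, and the pair $34$ lies in $t+1$ edges $34k$ with $5\le k\le 5+t$. We match our copy of $F_5$ as $1=x$, $2=y$, $3=z$, $4=w$, with $p$ serving as the first vertex $k=5$. The pair $zw$ lies in the edge $zwp\in E(H')$, so $zw\in\partial(H')$. The core property then gives $d_{H'}(z,w)\ge t+3$.

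The new third vertices must avoid $x$ and $y$. Among the at least $t+3$ vertices of $N_{H'}(z,w)$, at most two lie in $\{x,y\}$. Hence we can choose $t+1$ distinct vertices $f_1,\dots,f_{t+1}\in N_{H'}(z,w)\setminus\{x,y\}$; note that $z,w\notin N_{H'}(z,w)$ automatically. Then
\[
\{xyz,\ xyw\}\cup\{zwf_i:1\le i\le t+1\}
\]
is a set of edges of $H'$ on the $t+5$ distinct vertices $x,y,z,w,f_1,\dots,f_{t+1}$. This is exactly a copy of $F_5^t$, since $F_5^t$ has $t+1$ edges through the pair $34$ (for $k=5,\dots,5+t$). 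As $H'\subseteq H$, this contradicts the $F_5^t$-freeness of $H$, so $H'$ is $F_5$-free.

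The only step that needs care is the vertex count: $F_5^t$ requires $t+1$ edges on the pair $34$, and at most two candidates ($x$ and $y$) are excluded. This is exactly why the paper uses the threshold $t+3$ for the core. The same counting also repairs a slip in the proof of the preceding claim, where the $f_i$ should be taken from $N_{H'}(c,d)\setminus\{a,b\}$ rather than from $N_H(a,b)$.
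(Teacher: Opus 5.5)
Your proof is correct and follows the paper's argument. The core property gives codegree at least $t+3$ on the pair $zw$; discarding $x$ and $y$ leaves $t+1$ third vertices, which together with $xyz,xyw$ form a copy of $F_5^t$, and your side remark that the $f_i$ in the preceding claim should come from the neighborhood of $cd$ rather than $N_H(a,b)$ is also right. (The paper takes the neighbors from $N_H(u,v)$ rather than $N_{H'}$, which makes no difference since $H'\subseteq H$.)
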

We prove the claim. Suppose $H'$ contains a copy of $F_5$, denoted by $\{xyu,xyv,uvw\}$. Since $H'$ is the $(t+3)$-core, then $d_H(u,v) \geq t+3$. Then there exist $t+1$ vertices $\{w_i: 1 \leq i \leq t+1\} \in N_H(u,v) \setminus \{x,y\}$. Then $\{xyu,xyv\} \cup \{uvw_i: 1 \leq i \leq t+1\}$ forms a copy of $F^t_5$. A contradiction.

Since $H'$ is $F_5$-free, we have $|E(H')|\leq f(n,m)$ by Theorem~\ref{thm:main}. Thus,
\begin{equation} \label{eq:F}
|E(H')| \geq \Phi_t(n,m)-(t+2)\binom{n}{2} \geq f(n,m)-O(n^2),    
\end{equation}
Applying Lemma~\ref{lem:3-core-sta} to $H'$, we have a partition with $|V_1|=r$ such that
\begin{equation}\label{eq:Hprime-n2-blowup}
|E(H')\,\Delta\,E(K^3(V_1,V_2,V_3))|
\le O(n^2),    
\end{equation}

First, we show that 
\begin{equation} \label{eq:bt-n2-blowup}
 |E(H')| \geq f(n,m)-O(n). 
\end{equation}

Let $i,j,k \in [3]$. Define the following sets
\[X_1=\{(u,v): u \in V_i, v\in V_j \},\]
\[X_2=\{(u,v):u,v \in V_i \ \mbox{and} \ uvw \in E(H) \ \mbox{for some $w \in V_j$}\},\]
\[X_3=\{(u,v):u,v \in V_i, N(u,v) \subseteq V_i \ \mbox{and} \ d_{V_i}(u,v) \geq t+1\},\]
\[X_4=\{(u,v):u,v \in V_i, N(u,v) \subseteq V_i \ \mbox{and} \ d_{V_i}(u,v) \leq t\}.\]

We say a triple $u,v,w$ is {\em missing} if $e=uvw \in E(K^3(V_1,V_2,V_3)) \setminus E(H')$. By \eqref{eq:Hprime-n2-blowup}, $H'$ has at most $O(n^2)$ missing triples. 

Let $Y$ be the set of pairs $(u,v)$ with $d_H(u,v) \in \{1,\ldots,t+2\}$. We observe that
\[|Y \cap X_4|t \leq \ex \left(m,F^t_2 \right)+\ex \left(\left\lceil\frac{n-m}{2}\right\rceil,F^t_2 \right)+\ex \left(\left\lfloor\frac{n-m}{2}\right\rfloor,F^t_2 \right).\]
If $(u,v) \in Y \cap X_1$, then $(u,v)$ is contained in at least $O(n)$ missing triples and thus $|Y \cap X_1| \leq O(n)$.

Consider some pair $(u,v)\in X_2$ where $uvw \in E(H)$, $u,v \in V_i$ and $w \in V_j$. 
We observe that 
\begin{equation} \label{eq:d(v,z)}
\mbox{if} \ z \in N_{V_k}(u,w), \ \mbox{then} \ d_{V_j}(v,z) \leq t+1.     
\end{equation}

If $d_{V_j}(v,z) \geq t+2$, then there exist $t+1$ vertices $\{w_i: 1 \leq i \leq t+1\} \in N_{V_j}(v,z) \setminus \{w\}$. Then $\{uwv,uwz\} \cup \{vzw_i: 1 \leq i \leq t+1\}$ forms a copy of $F^t_5$. A contradiction. Hence, \eqref{eq:d(v,z)} follows. Let \[S=\{v: \exists u \ \mbox{such that} \ (u,v) \in X_2 \ \mbox{and} \ d_{V_k}(u,w) \geq O(n)\}.\]

Let $v \in S$. By \eqref{eq:d(v,z)}, for each $z \in N_{V_k}(u,w)$, we have $d_{V_j}(v,z) \leq t+1$. Thus, $v$ is contained in at least $d_{V_k}(u,w)(|V_j|-t-1) \geq O(n^2)$ missing triples. Since $H'$ has at most $O(n^2)$ missing triples, $|S| \leq O(1)$. We further partition $X_2=X^1_2 \cup X^2_2$:
\[X^1_2=\{(u,v) \in X_2: \mbox{at least one of $u,v \in S$}\},\]
\[X^2_2=\{(u,v) \in X_2: u,v \notin S\}.\]
Since $|S| \leq O(1)$, we have $|X^1_2|\leq O(n)$. If $(u,v) \in X^2_2$, then we have both of $d_{V_k}(u,w), d_{V_k}(v,w) \leq o(n)$ and thus each of $u,v$ is contained in at least $|V_k|-o(n) \geq O(n)$ missing triples. Since $H'$ has at most $O(n^2)$ missing triples, there are at most $O(n)$ such vertices. Thus, $|X^2_2|\leq O(n)$. Therefore $|X_2|\leq O(n)$ and so $|Y \cap X_2| \leq O(n)$. Let $(u,v) \in X_3$ where $u,v \in V_i$. Then there exist $t+1$ vertices $\{w_i: 1 \leq i \leq t+1\} \subseteq N_{V_i}(u,v)$. For any $w \in V_j$ and $z \in V_k$, if $wzu \in E(H)$, then $wzv \notin E(H)$. If not, then $\{wzu, wzv\} \cup \{uvw_i: 1 \leq i \leq t+1\}$ forms a copy of $F^t_5$. Thus, at least one of $u,v$ is contained in at least $|V_j||V_k|/2 \geq O(n^2)$ missing triples. We call a vertex {\em bad} if it is contained in at least $O(n^2)$ missing triples. Since $H'$ has at most $O(n^2)$ missing triples, the total number of bad vertices is at most $O(1)$. We observe that if $(u,v) \in X_3$, then at least one of $u,v$ is bad. Therefore $|X_3|\leq O(n)$ and so $|Y \cap X_3| \leq O(n)$. Hence, \eqref{eq:bt-n2-blowup} follows. Applying Lemma \ref{lem:3-core-sta} to $H'$, we have a partition with $|V_1|=r$ such that
\begin{equation}\label{eq:Hprime-n-blowup}
|E(H')\,\Delta\,E(K^3(V_1,V_2,V_3))|
\le O(n).    
\end{equation}

We now show that 
\begin{equation} \label{eq:bt-n-blowup}
 |E(H')| \geq f(n,m)-O(1). 
\end{equation}

There are at most $O(n)$ missing triples in $H'$. If $(u,v) \in Y \cap X_1$, then $(u,v)$ is contained in at least $O(n)$ missing triples and thus $|Y \cap X_1| \leq O(1)$. Since each $v \in S$ is contained in at least $O(n^2)$ missing triples, we must have $|S|=0$ and thus $|X^1_2|=0$. If $(u,v) \in X^2_2$, then both $u$ and $v$ are contained in at least $O(n)$ missing triples so $|X^2_2|\leq O(1)$. Therefore, $|Y \cap X_2| \leq |X_2| \leq O(1)$. Since each bad vertex is contained in at least $O(n^2)$ missing triples, there is no bad vertex. Since every pair of vertices in $X_3$ contains a bad vertex, we have $|X_3|=0$ and so $|Y \cap X_3|=0$. Hence, \eqref{eq:bt-n-blowup} follows. Applying Lemma \ref{lem:3-core-sta} to $H'$, we have a partition with $|V_1|=r$ such that
\begin{equation}\label{eq:Hprime-1-blowup}
|E(H')\,\Delta\,E(K^3(V_1,V_2,V_3))|
\le O(1).  
\end{equation}

Since $H'$ has at most $O(1)$ missing triples, using similar arguments as above, we have $|Y \cap X_1|=|X_2|=|X_3|=0$. Therefore, we have $Y \subseteq X_4$. This implies
\[|E(H \setminus H')|\leq \ex \left(|V_1|,F^t_2 \right)+\ex \left(|V_2|,F^t_2 \right)+\ex \left(|V_3|,F^t_2 \right).\]
Since $|X_2|=|X_3|=0$, we have $H' \subseteq K^3(V_1,V_2,V_3)$. Then $|E(H')|\leq |V_1||V_2||V_3|$. Hence,
\[|E(H)|\leq |V_1||V_2||V_3|+\ex \left(|V_1|,F^t_2 \right)+\ex \left(|V_2|,F^t_2 \right)+\ex \left(|V_3|,F^t_2\right) \leq \Phi_t(n,m).\]

The maximum is achieved by $K^3_{\,m,\,
\left\lceil \frac{n-m}{2}\right\rceil,\,
\left\lfloor \frac{n-m}{2}\right\rfloor}$ with a copy of $H(|V_i|,t) \in \mathcal{H}(|V_i|,t)$ inside each part.

This completes the proof of Theorem~\ref{thm:main-blow-up}.
\end{proof}

\begin{proof}[Proof of Theorem~\ref{thm:sta-blow-up}]
Choose a pair of vertices $a,b$ with $\Delta_2(G)=d_H(a,b)=r\ge m$, and let $M= N_H(a,b)$.
Let $H'$ be the $(t+3)$-core of $H$, obtained by repeatedly deleting every
edge containing a pair of codegree at most $t+2$. Since at most
$(t+2)\binom n2$ edges are deleted, we have
\begin{equation}\label{eq:stab-blowup-core-first}
        |E(H')|\ge \Phi_t(n,m)-\tau-(t+2)\binom n2 \geq f(n,m)-\tau-O(n^2).
\end{equation}
As in the proof of Theorem~\ref{thm:main-blow-up}, the $(t+3)$-core $H'$ is
$F_5$-free and $M$ is strongly independent.

Let $E_i$ be the set of edges that contains a pair of vertices $u,v$ such that $d_H(u,v) \leq t+2$ and $(u,v) \in X_i$ for $i \in [4]$.
\begin{claim} \label{cla:inside edges}
For a given partition $V(H)=V_1\cup V_2 \cup V_3$, let $J_i$ be the extremal $F_2^t$-free subhypergraph of $H[V_i]$. We have 
\[|E(H[V_1] \cup H[V_2] \cup H[V_3]) \Delta E(J_1 \cup J_2 \cup J_3)|\leq |E_1 \cup E_2 \cup E_3|+\tau. \]
\end{claim}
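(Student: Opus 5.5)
We have to bound the symmetric difference between the inside edges $H[V_1]\cup H[V_2]\cup H[V_3]$ and $J_1\cup J_2\cup J_3$. Here $J_i$ is read as an extremal $F_2^t$-free $3$-graph on $V_i$, chosen to share as many edges as possible with $H[V_i]$. The plan is to split the inside edges of $H$ by the type of pair that certifies them as non-core edges. Then we compare the clean part with the extremal $F_2^t$-free count, using the edge lower bound $|E(H)|\ge \Phi_t(n,m)-\tau$.

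\textbf{Step 1: the inside edges.} Every inside edge of $H$ lies in $V_i$ for some $i$. Since $H'$ is $F_5$-free with $M$ strongly independent, $H'$ is (after the earlier reductions) contained in $K^3(V_1,V_2,V_3)$. So an inside edge is not in $H'$ and contains a pair $uv\subseteq V_i$ with $d_H(u,v)\le t+2$. Such a pair lies in $X_2$, $X_3$ or $X_4$, or the edge contains a crossing pair of $X_1$. Hence
\[
E(H[V_1]\cup H[V_2]\cup H[V_3])\subseteq E_1\cup E_2\cup E_3\cup E_4 .
\]

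\textbf{Step 2: $E_4$ is $F_2^t$-free inside each part.} Let $E_4'$ be the edges of $E_4$ not in $E_1\cup E_2\cup E_3$. Every edge of $E_4'$ contains a pair whose whole neighbourhood lies in $V_i$ with codegree at most $t$. I would show that $E_4'\cap\binom{V_i}{3}$ is $F_2^t$-free. A copy of $F_2^t$ would need some pair with codegree at least $t+1$. For the certifying pair this is impossible. For the other pairs, the copy would force that pair to be of type $X_3$, and then the edge lies in $E_3$; this is the same mechanism as in the $X_3$ argument of Theorem~\ref{thm:main-blow-up}. If $F_2^t$-freeness fails directly, I would instead bound $|E_4'\cap\binom{V_i}{3}|\le \ex(|V_i|,F_2^t)+O(\cdot)$ by this codegree argument. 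In either case $E_4'\cap\binom{V_i}{3}$ can be taken inside some extremal $J_i$, giving
\[
|E(H[V_i])\setminus E(J_i)|\le |(E_1\cup E_2\cup E_3)\cap \tbinom{V_i}{3}| .
\]

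\textbf{Step 3: the other direction.} It remains to bound $|E(J_1\cup J_2\cup J_3)\setminus E(H)|$. Count
\[
|E(H)|\le |E(H')|+|E_1\cup E_2\cup E_3|+\sum_i |E(H[V_i])\cap E(J_i)| ,
\]
with $|E(H')|\le |V_1||V_2||V_3|$. By the balancing computation and the monotonicity of $\Phi_t$ in $m$,
\[
|V_1||V_2||V_3|+\sum_i \ex(|V_i|,F_2^t)\le \Phi_t(n,m).
\]
Combining this with $|E(H)|\ge \Phi_t(n,m)-\tau$ gives
\[
\sum_i\bigl(\ex(|V_i|,F_2^t)-|E(H[V_i])\cap E(J_i)|\bigr)\le \tau+|E_1\cup E_2\cup E_3| .
\]
The left side is exactly $|E(J_1\cup J_2\cup J_3)\setminus E(H)|$. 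The crossing edges of $H'$ need care: here I use only the inequality $|E(H')|\le |V_1||V_2||V_3|$, not equality, and the stated bound absorbs this slack into $\tau$.

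\textbf{Main obstacle.} Adding the two directions naively gives $2|E_1\cup E_2\cup E_3|+\tau$, not the stated bound. To get the exact constant I would run Step 3 keeping only the edges of $E_4'$ in the count. Then Step 2 contributes the $E_1\cup E_2\cup E_3$ term once, and Step 3 contributes only $\tau$ plus the deficit of $E_4'$ from extremal, which is exactly what is missing from $J$. The hard part is the $F_2^t$-freeness (or near-freeness) of $E_4'$ in Step 2, which needs a careful codegree case analysis.
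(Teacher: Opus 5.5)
Your proposal does not prove the claim: everything rests on Step 2, which is unproven and, in the form you use it, false.

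First, your $F_2^t$-freeness argument overlooks that $E_2$ and $E_3$ only collect edges through pairs with $d_H(u,v)\le t+2$. Suppose $\{x,y\}\subseteq V_i$ has $d_H(x,y)\ge t+3$. Then $t+1$ edges $xyz_k$, each lying in $E_4$ via a pair $\{x,z_k\}\in X_4$, form a copy of $F_2^t$, and nothing pushes them into $E_1\cup E_2\cup E_3$. The $X_3$-type argument from the proof of Theorem~\ref{thm:main-blow-up} only shows that $x$ or $y$ misses at least $|V_j||V_k|/2$ crossing triples. It does not show the configuration is absent. So at best you get near-freeness, with an error term your argument has nowhere to put.

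Second, and more seriously, an $F_2^t$-free family need not lie inside any extremal $F_2^t$-free $3$-graph on $V_i$. For $t=1$, $F_2^1$-free means every pair is in at most one edge. For suitable $|V_i|$, two Steiner triple systems on the two halves of $V_i$ form a maximal such $3$-graph: its uncovered pairs form a complete bipartite, hence triangle-free, graph. It has only about half of $\ex(|V_i|,F_2^1)$ edges and is contained in no extremal one. So the inequality $|E(H[V_i])\setminus E(J_i)|\le|(E_1\cup E_2\cup E_3)\cap\binom{V_i}{3}|$ is unsupported. Step 3 silently uses the same containment $E_4'\subseteq\bigcup_iE(J_i)$ when it writes $|E(H)|\le|E(H')|+|E_1\cup E_2\cup E_3|+\sum_i|E(H[V_i])\cap E(J_i)|$. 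Counting against $\Phi_t(n,m)$ only tells you that $E_4'$ has nearly $\sum_i\ex(|V_i|,F_2^t)$ edges. Turning that into edit distance from an extremal configuration requires a stability theorem for $F_2^t$-free $3$-graphs, which neither you nor Lemma~\ref{lem:F2t-function} provides.

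There are two further problems. First, Step 1's inclusion $H'\subseteq K^3(V_1,V_2,V_3)$ is only available in the final regime. When $\tau=\Omega(n^2)$ or $\tau=\Omega(n)$, which is where the claim is actually invoked, $H'$ may have non-crossing edges. The inside ones lie in $H[V_i]$ but in none of $E_1,\dots,E_4$.

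Second, even granting Steps 1 and 2, your repair of the factor $2$ does not work. Write $J=J_1\cup J_2\cup J_3$ and $A=(E_1\cup E_2\cup E_3)\cap\bigcup_i\binom{V_i}{3}$. Your own bookkeeping gives $|E(J)\setminus E(H)|=\sum_i\ex(|V_i|,F_2^t)-|E_4'|-|A\cap E(J)|\le\tau+|E_1\cup E_2\cup E_3|-|A\cap E(J)|$ and $|E(H[V_1]\cup H[V_2]\cup H[V_3])\setminus E(J)|=|A\setminus E(J)|$. So the total overshoots the target by $|A\setminus E(J)|-|A\cap E(J)|$, and nothing in the proposal controls this. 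You would have to exploit the crossing triples that $X_2$- and $X_3$-pairs force to be missing from $H'$.

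For comparison, the paper does not analyse $E_4$ at all. It argues by a single counting contradiction, playing $|E(H)|\ge\Phi_t(n,m)-\tau$ against $|E(H')|\le f(n,m)$ for the $F_5$-free core (Theorem~\ref{thm:main}). But it simply asserts the passage from a large symmetric difference to $|E_4|\le\sum_i\ex(|V_i|,F_2^t)-\tau-|E_1\cup E_2\cup E_3|$. That is exactly where structural information of the kind your Step 2 was after would be needed, so it offers no shortcut around this gap.
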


\begin{proof}
Suppose otherwise. We have 
\[|E_4| \leq \ex \left(|V_1|,F^t_2 \right)+\ex \left(|V_2|,F^t_2 \right)+\ex \left(|V_3|,F^t_2 \right)-\tau-|E_1 \cup E_2 \cup E_3|.\]
Then
\[|E(H')|\geq f(n,m)+\ex \left(|V_1|,F^t_2 \right)+\ex \left(|V_2|,F^t_2 \right)+\ex \left(|V_3|,F^t_2 \right)-\tau-|E_1 \cup E_2 \cup E_3|-|E_4| \geq f(n,m),\]   
contradicting to $H'$ being $F_5$-free.
\end{proof}

If $\tau$ is at least a sufficiently large constant multiple of $n^2$, then applying Lemma~\ref{lem:3-core-sta} to $H'$ and then adding back the deleted edges $E_1 \cup E_2 \cup E_3$ which are at most $O(n^2)=O(\tau)$, we have \[|E(H(V_1,V_2,V_3)) \Delta E(K^3(V_1,V_2,V_3))|
        \le O(\tau).\] 
Since $|E(H[V_1] \cup H[V_2] \cup H[V_3]) \Delta E(J_1 \cup J_2 \cup J_3)|\leq O(\tau)$ by Claim \ref{cla:inside edges}, the desired bound follows.        
        Thus, we may assume $\tau=o(n^2)$. Applying Lemma~\ref{lem:3-core-sta} to $H'$, we have 
\begin{equation}\label{eq:stab-blowup-close-n2}
        |E(H')\Delta E(K^3(V_1,V_2,V_3))|
        \le O(n^2).
\end{equation}

Let $Y$ be the set of pairs $(u,v)$ with $d_H(u,v)\in \{1, \ldots, t+2\}$. Using the same argument as in the proof of Theorem \ref{thm:main-blow-up}, we obtain $|Y \cap X_1|+|Y \cap X_2|+|Y \cap X_3| \leq O(n)$ and thus
\begin{equation}\label{eq:stab-blowup-core-second}
        |E(H')|\ge f(n,m)-\tau-O(n).
\end{equation}
If $\tau$ is at least a sufficiently large constant multiple of $n$, then applying Lemma~\ref{lem:3-core-sta} to $H'$ and then adding back the deleted edges $E_1 \cup E_2 \cup E_3$ which are at most $O(n)=O(\tau)$, we have \[|E(H(V_1,V_2,V_3)) \Delta E(K^3(V_1,V_2,V_3))|
        \le O(\tau).\] 
Since $|E(H[V_1] \cup H[V_2] \cup H[V_3]) \Delta E(J_1 \cup J_2 \cup J_3)|\leq O(\tau)$ by Claim \ref{cla:inside edges}, the desired bound follows. Thus, we may assume $\tau=o(n)$. Applying Lemma~\ref{lem:3-core-sta}  to $H'$, we have
\begin{equation}\label{eq:stab-blowup-close-n}
        |E(H')\Delta E(K^3(V_1,V_2,V_3))|
        \le O(n).
\end{equation}
Using the same argument as in the proof of Theorem \ref{thm:main-blow-up}, we obtain $|Y \cap X_1|+|Y \cap X_2|+|Y \cap X_3| \leq O(1)+O(1)+0=O(1)$ and thus
\begin{equation}\label{eq:stab-blowup-core-third}
        |E(H')|\ge f(n,m)-\tau-O(1).
\end{equation}
If $\tau$ is a constant at least one, then applying Lemma~\ref{lem:3-core-sta} to $H'$ and then adding back the deleted edges $E_1 \cup E_2 \cup E_3$ which are at most $O(1)=O(\tau)$, we have \[|E(H(V_1,V_2,V_3)) \Delta E(K^3(V_1,V_2,V_3))|
        \le O(\tau).\] 
Since $|E(H[V_1] \cup H[V_2] \cup H[V_3]) \Delta E(J_1 \cup J_2 \cup J_3)|\leq O(\tau)$ by Claim \ref{cla:inside edges}, the desired bound follows. Thus, we must have $\tau<1$. Applying Lemma~\ref{lem:3-core-sta} to $H'$, we have 
\begin{equation}\label{eq:stab-blowup-final-crossing}
        |E(H')\Delta E(K^3(V_1,V_2,V_3))|
        \le O(1).
\end{equation}
Using the same argument as in the proof of Theorem \ref{thm:main-blow-up}, we obtain $|Y \cap X_1|+|Y \cap X_2|+|Y \cap X_3|=0$ and thus
\begin{equation}\label{eq:stab-blowup-core-third}
        |E(H')|\ge f(n,m)-\tau.
\end{equation}
Since both $|E(H')|$ and $f(n,m)$ are integers,
Theorem~\ref{thm:main} implies that $H'$ is extremal i.e., $H'=K^3(V_1,V_2,V_3)$. By Claim \ref{cla:inside edges}, we have $H[V_1] \cup H[V_2] \cup H[V_3] =J_1 \cup J_2 \cup J_3$. Thus the desired conclusion holds with zero error.

\end{proof}

\medskip
\noindent\textbf{Acknowledgements.} ChatGPT 5.5 were used to improve the presentation of the manuscript; 
the main concepts, ideas, and proofs were developed independently by the authors
and we take full responsibility for the contents of the paper.

\end{document}